\newcommand{\vin}{\rotatebox[origin=c]{-90}{$\in$}}
\def\blfootnote{\xdef\@thefnmark{}\@footnotetext}
\makeatletter\@addtoreset{chapter}{part}\makeatother
\newcommand{\xdownarrow}[1]{%
  {\left\downarrow\vbox to #1{}\right.\kern-\nulldelimiterspace}
}
\begin{document}

\title{Real intersection theory (II)}

 \author{B. Wang\\
\begin{CJK}{UTF8}{gbsn}
(汪      镔)
\end{CJK}}

\maketitle

\newcommand{\hookuparrow}{\mathrel{\rotatebox[origin=c]{90}{$\hookrightarrow$}}}
\newcommand{\hookdownarrow}{\mathrel{\rotatebox[origin=c]{-90}{$\hookrightarrow$}}}
\newcommand\ddaaux{\rotatebox[origin=c]{-90}{\scalebox{0.70}{$\dashrightarrow$}}} 
\newcommand\dashdownarrow{\mathrel{\text{\ddaaux}}}

\newtheorem{thm}{Theorem}[section]

\newtheorem{ass}[thm]{\bf {Claim} }
\newtheorem{prop}[thm]{\bf {Property } }
\newtheorem{prodef}[thm]{\bf {Proposition and Definition } }
\newtheorem{construction}[thm]{\bf {Main construction } }
\newtheorem{assumption}[thm]{\bf {Assumption} }
\newtheorem{proposition}[thm]{\bf {Proposition} }
\newtheorem{theorem}[thm]{\bf {Theorem} }
\newtheorem{apd}[thm]{\bf {Algebraic Poincar\'e duality} }
\newtheorem{cond}[thm]{\bf {Condition} }
\newtheorem{ex}[thm]{\bf Example}
\newtheorem{corollary}[thm]{\bf Corollary}
\newtheorem{definition}[thm]{\bf Definition}
\newtheorem{lemma}[thm]{\bf Lemma}
\newtheorem{con}[thm]{C}
\newtheorem{conj}[thm]{\bf Conjecture}

\begin{abstract}
Continuing from part (I), we develop properties of real
 intersection theory that turns out  to be an
extension of the well-established theory  in algebraic geometry. 
	\end{abstract}

\bigskip

\blfootnote{\emph{Key words}: currents, De Rham, intersection. } 
\blfootnote{\emph{2000 Mathematics subject classification }: 53C65, 32C30, 14C17, 14C30. }

\tableofcontents

\section{Introduction}  Intersection   in mathematics has a long history. 
But the systemically developed  theories only started appearing in the 20th century. 
They are centered around the  quotient rings that are derived  from  freely generated Abelian groups of  non-quotient objects.
  While the  quotient rings  fit into the known axiomatic system well,
 the  non-quotient groups do not. \bigskip

For instance  in classical approach  the topological intersection for a real compact manifold $X$ in homology or/and  
cohomology (quotient groups) are obtained
from the non quotient objects -- singular chains.  Once the theory is set up,  quotient groups are well-adapted to the  axiomatic
environment, but the groups of singular chains are not.  Using cohomology
we can let $H^i(X;\mathbb Z)$ be the cohomology of degree $i$ with integer coefficients. The product is defined in an
elaborated method through the intersection of chains as the cup product, 
\begin{equation}\begin{array}{ccc}
\cup: H^p(X;\mathbb Q)\times H^q(X;\mathbb Z) &\rightarrow  & H^{p+q}(X;\mathbb Z). \end{array}
\end{equation}
The  product gives a ring structure to the cohomology.  Then the focus is shifted to the framework of rings which provides rich
mathematical structures. 
 However 
such a ring structure does not exist on the groups of chains. So the groups of singular chains plays a supporting role behind
the cohomology groups. 
   \bigskip

Another example is the intersection in algebraic geometry defined by William Fulton in [3].  Its non-quotient  objects are 
algebraic cycles, which form an Abelian group $Z$.  
Let $X$ be a smooth projective variety over a field.  Let $CH(X)$ be the Chow groups (quotient groups). Then
there is an elaborated product map through the intersection of algebraic cycles,
 \begin{equation}\begin{array}{ccc}
\bullet : CH(X)\times CH(X) &\rightarrow  & CH(X) \end{array}
\end{equation}
such that $CH(X)$ is a ring.  The study of algebraic cycles has been motivated by the structure of the Chow rings. 
So the non-quotient $Z$ group plays a supporting role  since it does not have the similar ring structure.
\bigskip

Both ring structures $\cup, \bullet$ are classically known to be compatible with each other and functorial 
 between  spaces. These quotioned intersections  play important roles
in the more general cohomological theory that later generalizes both quotient rings
in the derive category. They created  rich algebraic structures that flourish beyond the original transcendental geometry.
   However the flourishing  has been one-sided 
focusing on quotient groups mainly through homological algebra.  
In this paper  
we try to look into the other direction which studies the non-quotient objects. 
Our direction shows  that the global invariant -- cohomology ring may 
 contain  non-trivial geometry without quotient in the local charts. More specifically, the intersection in cohomology
which is considered to be a quotient object, is related to  
the geometric measure in transcendental geometry, which is a non-quotient object.  
 The study of non-quotient objects for quotient objects  is the real intersection theory. 
\bigskip

On the technical side, we let $\mathcal X$ be a connected, oriented manifold of dimension $m$. In [9], using local charts, we defined 
the subspace  $\mathcal C(\mathcal X)$ called Lebesgue currents in the space of currents  such that there is a bilinear homomorphism -- the intersection of  currents, 
\begin{equation}\begin{array}{ccc}
[\cdot\wedge \cdot]: \mathcal C(\mathcal X)\times \mathcal C(\mathcal X) &\rightarrow & \mathcal C(\mathcal X)\\
(T_1, T_2) &\rightarrow & [T_1\wedge T_2].
\end{array}\end{equation}
The intersection is extrinsic,  for it depends on De Rham data consisting of a De Rham covering of $\mathcal X$.  But it satisfies
an important intrinsic relation,
\begin{equation}
supp([T_1\wedge T_2])\subset supp(T_1)\cap supp(T_2).
\end{equation}
where the support is the focus of the theory. 
In this paper which is not self-contained, we continue [9] to establish sufficient 
 properties for  the applications.  It is a detailed demonstration that by adjusting the extrinsic data, 
the well-known operations on quotient objects can be
carried over to non-quotient objects \footnote{Intrinsically it is impossible.}.  As a conclusion  we give a short but immediate application that 
proves  the generalized Hodge conjecture on 3-folds.

\section{Basic properties}
\bigskip

Let $\mathcal X$ be a connected oriented manifold.  Let $T_1, T_2$ be two arbitrary singular chains. Then
$T_1\cap T_2$ may not be a singular chain.  To catch the essence of such an intersection we are
 going to use a tool --- current,  the notion  created by G. de Rham in 1950's, [2]. \bigskip

We denote the space of $C^\infty$ forms with a compact support by $\mathscr D(\mathcal X)$, and its topological dual -- the space of currents  by $\mathscr D'(\mathcal X)$.  If necessary we add a superscript(a subscript)
to denote the codimension (the dimension) of currents.  The functional of a current $T$ is denoted by
\begin{equation}
\int_T (\bullet).
\end{equation}

 \bigskip

\begin{lemma}
Let $\mathcal Z\subset \mathcal X$ be a submanifold. Let 
$$\begin{array}{ccc}\mathcal Z &\stackrel{i}\hookrightarrow & \mathcal X\end{array}$$ be the inclusion map. 
Let 
\begin{equation}
\mathscr D(\mathcal X, \mathcal Z)=\{\phi\in \mathscr D(\mathcal X): \phi|_{\mathcal Z}=0\},
\end{equation}
where $\phi|_\mathcal Z$ is the pullback of the $C^\infty$-differential form by the inclusion map.
So \begin{equation}
\mathscr D(\mathcal X, \mathcal Z)\subset \mathscr D(\mathcal X).
\end{equation}

Then the  sequence of topological dual
\begin{equation}\begin{array}{ccccc}
\mathscr D'(\mathcal Z) &\stackrel{i_\ast}\rightarrow & \mathscr D'(\mathcal X) &\stackrel{R}\rightarrow &
 \mathscr D'(\mathcal X, \mathcal Z)
\end{array}\end{equation}
is exact, where $'$ stands for the topological dual and $R$ is the restriction map. 

\end{lemma}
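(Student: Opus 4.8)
The plan is to recognize the stated sequence of currents as the transpose of a short exact sequence of test forms and to keep exactness under dualization by exhibiting a continuous splitting. Write $i^\ast\colon \mathcal D(\mathcal X)\to \mathcal D(Z)$, $\phi\mapsto \phi|_Z$, for the pullback of forms along the inclusion $i$. By the definition (2.15) one has $\ker i^\ast=\mathcal D(\mathcal X,Z)$, while the two maps in the statement are exactly the transposes: $i_\ast=(i^\ast)'$, so that $(i_\ast T)(\phi)=T(\phi|_Z)$, and $R=j'$, where $j\colon\mathcal D(\mathcal X,Z)\hookrightarrow \mathcal D(\mathcal X)$ is the inclusion. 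The lemma is therefore the dual of the assertion that
\[
0\to \mathcal D(\mathcal X,Z)\xrightarrow{\,j\,}\mathcal D(\mathcal X)\xrightarrow{\,i^\ast\,}\mathcal D(Z)\to 0
\]
is exact.

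For this forms-sequence the only nontrivial point is the surjectivity of $i^\ast$: every compactly supported smooth form $\psi$ on $Z$ must extend to a form on $\mathcal X$ restricting to it. I would establish this, together with the continuity needed afterwards, by constructing a continuous linear section $E\colon \mathcal D(Z)\to \mathcal D(\mathcal X)$ of $i^\ast$. Choosing a tubular neighborhood $N$ of $Z$ with projection $\pi\colon N\to Z$ and a cutoff $\rho$ equal to $1$ along $Z$ and supported in the normal fibers, I would set $E(\psi)=\rho\cdot\pi^\ast\psi$. Then $i^\ast E(\psi)=\psi$ because $\pi\circ i=\mathrm{id}_Z$ and $\rho\equiv 1$ on $Z$, and $E(\psi)$ is compactly supported whenever $\psi$ is.

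To make the resulting dual exactness concrete rather than quoting a splitting principle, I would verify $\mathrm{image}(i_\ast)=\ker R$ directly. The inclusion $\subseteq$ is immediate: if $S=i_\ast T$ and $\phi|_Z=0$, then $S(\phi)=T(\phi|_Z)=0$, so $R(S)=0$. For $\supseteq$, take $S$ with $S|_{\mathcal D(\mathcal X,Z)}=0$ and set $T=S\circ E$; this is a continuous functional on $\mathcal D(Z)$, hence a current on $Z$. For any $\phi\in\mathcal D(\mathcal X)$ the difference $\phi-E(\phi|_Z)$ lies in $\ker i^\ast=\mathcal D(\mathcal X,Z)$, on which $S$ vanishes, so $S(\phi)=S\big(E(\phi|_Z)\big)=T(\phi|_Z)=(i_\ast T)(\phi)$, giving $S=i_\ast T$.

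The step I expect to be the main obstacle is the construction of the section $E$. One must choose the tubular neighborhood and the normal cutoff so that $E$ sends compactly supported forms to compactly supported forms and is continuous in the LF-topology even when $Z$ and $\mathcal X$ are non-compact and the tubular neighborhood narrows as one moves along $Z$; controlling the normal support uniformly over the (possibly non-compact) base $Z$ is the technical heart of the argument, and without it the well-definedness of $T$ is clear but the continuity of $T$—and hence the fact that $T$ is genuinely a current—would be in doubt.
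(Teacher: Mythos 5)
Your proposal is correct and is essentially the paper's own argument: your section $E(\psi)=\rho\cdot\pi^\ast\psi$ is exactly the paper's $h\,\lambda^\ast(\cdot)$ built from a tubular neighborhood projection and cutoff, and both proofs recover $T$ from $S\in\ker R$ by $T=S\circ E$ and the observation that $\phi-E(\phi|_Z)$ vanishes on $Z$. The only difference is cosmetic framing (dualizing a split short exact sequence versus a direct verification), plus your more explicit attention to the non-compact case, which the paper dispatches with a one-line appeal to a partition of unity.
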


\bigskip

\begin{proof} We may assume $\mathcal Z$ is compact. It is trivial that $ R\circ i_\ast=0$. Let's show 
$$ker (R)\subset Im(i_\ast).$$

Let $U$ be a tubular neighborhood of $\mathcal Z$ and $j: U\to \mathcal Z$ be a
 projection induced from the normal bundle structure of $U$.
Let $h$ be a $C^\infty$ function on $\mathcal X$ such that it has a compact support in $U$ and 
it is $1$ on $\mathcal Z$. 
  For any $T\in \mathscr D'(\mathcal X)$, we define a current $T'$ on $\mathcal Z$

\begin{equation}
\int_{T'} (\cdot)= \int_{T} h j^{\ast}(\cdot) .\end{equation}
Let $T\in ker(R)$. We would like to show 
$$i_\ast (T')=T.$$
It suffices to show that for  any testing form of $\phi$ on $\mathcal X$
$$\int_{T} hj^{\ast} (\phi|_\mathcal Z)=\int_T \phi,$$
or
\begin{equation}
\int_T  \biggl(h j^{\ast} (\phi|_\mathcal Z)-\phi\biggr)=0
.\end{equation}
Since $h j^{\ast} (\phi|_\mathcal Z)-\phi$ vanishes on $\mathcal Z$, the formula (2.6) holds.  
If $\mathcal Z$ is non-compact, we can use a partition of
unity to have the same proof.
We complete the proof.

\end{proof}

\bigskip

In [9], we have developed the local and global calculations of intersection of Lebesgue currents. 
We refer the readers to [2] and [9] for the necessary background material.
Let's give a summary. It started with the De Rham's regularization.

\begin{theorem} (G. de Rham)\quad\par
Let $\mathcal X$ be a connected, oriented manifold. 
Let $\epsilon$ be a small positive number.  There are linear operators $R_\epsilon$ and 
  $A_\epsilon$ on $\mathscr D'(\mathcal X)$ satisfying
\par
(1) a homotopy formula
\begin{equation}
R_\epsilon T-T=b A_\epsilon T+ A_\epsilon bT.
\end{equation}
\par \hspace{1cc} where $b$ is the boundary operator. 
\par
(2) $supp(R_\epsilon T), supp(A_\epsilon T) $ are contained in any given neighborhood of\par \hspace{1cc}
$supp(T)$ provided $\epsilon$ is sufficiently small.\par
 
(3) $R_\epsilon T$ is $C^\infty$;\par
(4)  If $T$ is $C^r$, $A_\epsilon T$ is $C^r$.\par
(5) If a smooth differential form $\phi$ varies in a bounded set and $\epsilon$ is bounded \par \hspace{1cc} above, then
$R_\epsilon \phi, A_\epsilon \phi$ are bounded.\par
(6) As $\epsilon\to 0$, 
$$R_\epsilon T(\phi)\to T(\phi), A_\epsilon T(\phi)\to 0$$
 \par \hspace{1cc} uniformly on each bounded set $\phi$. 
\bigskip

In this paper we use the notations $R_\epsilon^{\mathcal X}, A_\epsilon^{\mathcal X}$ to replace
De Rham's notations $R_\epsilon, A_\epsilon$. 
\end{theorem}
\bigskip

\begin{definition} (for De Rham's regularization)\quad\par
(a) We call $R_\epsilon^\mathcal X$ from Theorem 2.2 the De Rham's smoothing operator,  \par\hspace{1cc} 
$ A_\epsilon^\mathcal X$ from Theorem 2.2 the De Rham's homotopy operator,   
 and the \par\hspace{1cc} 
associated regularization the De Rham's   regularization. \par
(b)    
We define De Rham data to be all items in the construction 

\end{definition}
 
\bigskip
We continue to have

\bigskip

\begin{theorem} (B. Wang 2019)\quad\par
 Let $\mathcal X$ be a connected, oriented manifold of dimension $m$, equipped with a De Rham data.  
Let $T_1, T_2\in \mathcal C(\mathcal X)$ such that $dim(T_1)+dim(T_2)\geq m$.  \par
(1) Then there is a family of $C^\infty$ forms, the kernel  
$\varrho_\epsilon(\mathbf x, \mathbf y)$ on $\mathcal X\times \mathcal X$ where $\epsilon>0$ such that
\begin{equation}
R_\epsilon^{\mathcal X} (T_2) =\int_{ \mathbf y\in T_2} \varrho_\epsilon(\mathbf x, \mathbf y)
\end{equation}
where the right hand side is defined by the fibre integral similar to the fibre integral of the projection 
$$\begin{array}{ccc} \mathcal X\times \mathcal X &\rightarrow & \mathcal X (1st\ copy)\\
(\mathbf x, \mathbf y)&\rightarrow & \mathbf x.\end{array}$$
Furthermore   $\varrho_\epsilon(\mathbf x, \mathbf y)$ is a closed form,  which in case with a compact $\mathcal X$
 is Poincar\'e dual  to the diagonal of $\mathcal X\times \mathcal X$ in the cohomology group.\footnote{In the language of [2], 
 $\varrho_\epsilon(\mathbf x, \mathbf y)$   is homologous to the diagonal for each non-zero $\epsilon$.  }

\par
(2) There exists a subspace $\mathcal C(\mathcal X)$ of $\mathscr D'(\mathcal X)$, defined in geometric measure theory, such that
for $T_1, T_2\in \mathcal C(\mathcal X)$ continuous functional
\begin{equation}\begin{array}{ccc}
\phi &\rightarrow & \displaystyle{\lim_{\epsilon\to 0}}\int_{T_1}R_\epsilon^{\mathcal X} (T_2)\wedge \phi\\
\vin & &\vin\\
\mathscr D(\mathcal X) && \mathbb R\\
\end{array}
\end{equation}
exists and lies in $ \mathcal C(\mathcal X)$, denoted by
\begin{equation}
[T_1\wedge T_2].
\end{equation}

\end{theorem}

\bigskip

\begin{proposition} Let $\mathcal X$ be a manifold equipped with a De Rham data.
Let $i: \mathcal Z\hookrightarrow \mathcal X$ be a submanifold.
Let $T\in \mathcal C(\mathcal X)$. Then there is a current denoted
by $[\mathcal Z\wedge T]_{\mathcal Z}$ in $\mathcal Z$ such that
\begin{equation}
i_\ast ([\mathcal Z\wedge T]_{\mathcal Z})=[\mathcal Z\wedge T].
\end{equation}

\end{proposition}
\bigskip

\begin{proof}

For any $\phi\in \mathscr D(\mathcal X, \mathcal Z)$, 
\begin{equation}
\int_{[\mathcal Z\wedge T]} \phi
=\displaystyle{\lim_{\epsilon\to 0}}
\int_{\mathcal Z}R_\epsilon^\mathcal X(T)\wedge\phi=0.
\end{equation}
Then by Lemma 2.1, there is a current in $\mathcal Z$ satisfying (2.11).

\end{proof}
\bigskip

\subsection{Basic properties}

\begin{prop}\quad\par

Let $\mathcal X$ a connected, oriented $C^\infty$ manifold of dimension $m$. Assume it is equipped with a De Rham data. 
In [9], we defined the intersection of  currents,
\begin{equation}\begin{array}{ccc}
\mathcal C(\mathcal X)\times \mathcal C(\mathcal X) &\rightarrow &  \mathcal C(\mathcal X) \\
(T_1, T_2) &\rightarrow & [T_1\wedge T_2],
\end{array}\end{equation}
on the subspace  $\mathcal C(\mathcal X)\subset \mathscr D'(\mathcal X)$ consisting  of  Lebesgue currents.
For all Lebesgue currents $T_1, T_2$, the intersection $[T_1\wedge T_2]$ has properties:
\bigskip

(1) (Supportivity) 
\begin{equation}
supp([T_1\wedge T_2])\subset supp(T_1)\cap supp(T_2). \end{equation}

(2) (Closedness) The intersection current $[T_1\wedge T_2] $ is  closed if $T_1, T_2$ are.\par

\bigskip

(3) (Graded commutativity)   There is a graded-commutativity. If 
$$deg(T_1)=p, deg(T_2)=q, $$ then 
\begin{equation}
[T_1\wedge T_2]=(-1)^{pq} [T_2\wedge T_1]
\end{equation}

or equivalently

\begin{equation}
\displaystyle{\lim_{\epsilon'\to 0}} \displaystyle{\lim_{\epsilon\to 0}}\int_{\mathcal X} R_{\epsilon}(T_1)
\wedge R_{\epsilon'}T_2\wedge \phi
=\displaystyle{\lim_{\epsilon\to 0}}
\displaystyle{\lim_{\epsilon'\to 0}}\int_{\mathcal X} 
R_{\epsilon}(T_1)\wedge R_{\epsilon'}T_2\wedge \phi.
\end{equation}
 for the test from $\phi\in \mathscr D(\mathcal X)$. 
\par

\bigskip

(4) (Cohomologicity) Let $\mathcal X$ be compact. We use $\langle T\rangle $ to denote the cohomology class represented by a closed current $T$.
If  $T_1, T_2$ are closed, in $H(\mathcal X;\mathbb R)$,  we have 
\begin{equation}
 \langle T_1\rangle\cup \langle T_2\rangle=\langle [T_1\wedge T_2]\rangle.
\end{equation}\par
Hence if  the cohomology  $\langle T_1\rangle,  \langle T_2\rangle $ are integral, so is $\langle [T_1\wedge T_2]\rangle$.

(5) (Associativity)  There is an associativity
\begin{equation}
\biggl[[T_1\wedge T_2]\wedge T_3\biggr]=\biggl[T_1\wedge [T_2\wedge T_3]\biggr].\end{equation}

\end{prop}

(6) (Leibniz rule) If $bT_1, bT_2$ are Lebesgue and $deg(T_1)=p,$ 
then
\begin{equation}
d[T_1\wedge T_2]=[dT_1\wedge T_2]+(-1)^{p} [T_1\wedge dT_2].
\end{equation}

\bigskip

\begin{proof}\quad \par

(1)  The proof is in  [9].

\bigskip

(2) Let $\phi$ be a test form.
By the definition 
\begin{align}\begin{split}
 &\int_{ b [T_1\wedge T_2]} \phi
\\ &=\lim_{\epsilon\to 0} \int_{T_1}  R_\epsilon T_2\wedge d\phi\\
 &=\pm \int_{T_1} d R_\epsilon T_2\wedge \phi\end{split}
\end{align}

According to the homotopy (2.7),  
\begin{equation}
b R_\epsilon T_2-b T_2= bb A_\epsilon T_2- b A_\epsilon b T_2 
\end{equation}
  Because $T_2 $ is closed, 
$$b R_\epsilon T_2=0.$$
So  $ [T_1\wedge T_2]$ is closed.

\bigskip

\bigskip

(3) (Graded commutativity ).  
The proof is in  [9].

\bigskip

(4)  Let $\phi$ be a closed $C^\infty$ form of degree $deg(T_1)+deg(T_2)$. 
Denote the intersection number in cohomology by $int(\cdot, \cdot)$.
Hence the intersection number,
\begin{equation}
 int(\langle [T_1\wedge T_2]\rangle, \langle \phi\rangle) \end{equation}
is a well-defined real number that equals to 
\begin{equation} 
\displaystyle{\lim_{\epsilon\to 0}}\int_{T_1} R_{\epsilon}(T_2)\wedge \phi.\end{equation}
(by De Rham's theorem) which is \begin{equation} 
(-1)^{deg^2(T_1)}\displaystyle{\lim_{ \epsilon\to 0 }}\int_{T_2\wedge \phi} R_{\epsilon}(T_1)\\
\end{equation}
(by the community (3)). 
Then we use (2.24) as  De Rham's Kronecker index
$$T_1\wedge (T_2\wedge \phi)[1]$$ which is well-defined between the currents $T_1$ and $T_2\wedge \phi$.
We obtain that 
\begin{equation}int(\langle [T_1\wedge T_2]\rangle, \langle \phi\rangle)=
T_1\wedge (T_2\wedge \phi) [1]\end{equation}
 
On the compact manifold, the Kronecker index only depends the cohomology classes (see section 20, chapter IV [2]). 
Hence (2.25) is the intersection number  
\begin{equation}
int( \langle T_1\rangle, \langle T_2\wedge \phi\rangle)
\end{equation}
which by the associtivity of the intersection in cohomology is the same as 
\begin{equation}
int( \langle T_1\rangle \cup  \langle T_2\rangle,   \langle\phi\rangle)
\end{equation}
Since $\phi$ is any closed form, 
\begin{equation}
\langle T_1\rangle \cup  \langle T_2\rangle=\langle [T_1\wedge T_2]\rangle.
\end{equation}
\bigskip

(5) Let degrees of $T_i$ be $p_i$. The intersection of Lebesgue currents is still Lebesgue. Therefore we have a triple intersection expressed as a functional  on the test forms $\bullet $.

Then
\begin{equation}\begin{array}{c}
\int_{\biggl[[T_1\wedge T_2]\wedge T_3\biggr]} (\bullet) \\
=\displaystyle{\lim_{\epsilon_3\to 0}}\displaystyle{\lim_{\epsilon_2\to 0}}\displaystyle{\lim_{\epsilon_1\to 0}}\int_{\mathcal X} R_{\epsilon_1}(T_1)\wedge R_{\epsilon_2}(T_2) 
\wedge R_{\epsilon_3}(T_3)\wedge (\bullet).\end{array}\end{equation}
By the commutitavity in  Proposition 4.4 of [9], 
\begin{equation}\begin{array}{c}
\int_{\biggl[T_1\wedge [T_2\wedge T_3]\biggr]} (\bullet) =\int_{(-1)^{p_1(p_2+p_3)}\biggl[ [T_2\wedge T_3]\wedge T_1\biggr]} (\bullet)\\
=(-1)^{p_1(p_2+p_3)}\displaystyle{\lim_{\epsilon_1\to 0}}\displaystyle{\lim_{\epsilon_3\to 0}}
\displaystyle{\lim_{\epsilon_2\to 0}\int_{\mathcal X}} R_{\epsilon_2}(T_2)\wedge R_{\epsilon_3}(T_3)\wedge R_{\epsilon_1}(T_1)
\wedge (\bullet) \\
=\displaystyle{\lim_{\epsilon_3\to 0}}\displaystyle{\lim_{\epsilon_2\to 0}}
\displaystyle{\lim_{\epsilon_1\to 0}}\int_{\mathcal X} R_{\epsilon_1}(T_1)\wedge R_{\epsilon_2}(T_2)\wedge R_{\epsilon_3}(T_3)\wedge (\bullet)\\
=\int_{\biggl[[T_1\wedge T_2]\wedge T_3\biggr]}(\bullet)
.\end{array}\end{equation}

\bigskip

(6) (Leibniz Rule) Let $\phi\in \mathscr D(\mathcal X)$ be a test form. 
Let $$deg(T_1)=p, deg(T_2)=q.$$
Then
\begin{align*}
 & b[T_1\wedge T_2](\phi)
\\ &= 
\displaystyle{\lim_{\epsilon\to 0}}\int_{T_1} R_\epsilon T_2\wedge d\phi\\
&=\displaystyle{\lim_{\epsilon\to 0}}\int_{T_1}\biggl( (-1)^{q} d (R_\epsilon T_2\wedge \phi)+(-1)^{q+1} dR_\epsilon T_2\wedge\phi\biggr) 
\\
&=\displaystyle{\lim_{\epsilon\to 0} }\int_{(-1)^{q} bT_1}R_\epsilon T_2\wedge\phi+
\displaystyle{\lim_{\epsilon\to 0}}\int_{(-1)^{q+1}T_1}dR_\epsilon T_2\wedge\phi\\
 &(\text{ $bT_1, bT_2$ are Lebesgue})\\
&=\int_{(-1)^{q}[bT_1\wedge T_2]}\phi+\int_{(-1)^{q+1}[T_1\wedge dT_2]}\phi
\end{align*}
Hence 

\begin{equation}
b[T_1\wedge T_2]=(-1)^{q}[bT_1\wedge T_2]+(-1)^{q+1}[T_1\wedge dT_2].
\end{equation}
After change the sign, we found (2.31) is the same as 
(2.19). 

\end{proof}

\subsection { Product and Inclusion}

Intersection theory in algebraic geometry has functoriality. We study its extension to the real intersection theory. 
\bigskip

\begin{definition}\quad \par

(1) Let $\mathcal U_1$, $\mathcal U_2$ be the De Rham data for the manifolds
$\mathcal X_1, \mathcal X_2$ respectively. Let $B_i^1\subset U_i, B_j^2\subset U_j^2$ be the De Rham covering from $\mathcal U_1, \mathcal U_2$  and $f^1_i, f^2_j$
are the associated convolution functions.  
 Then  given an order of  the  De Rham's covering $B_i^1\times B_j^2 \subset U_i^1\times U_j^2$ 
of $\mathcal X_1\times \mathcal X_2$, we define the De Rham data on the product $\mathcal X_1\times \mathcal X_2$ by taking the 
direct product of given data in each manifold. For instance,  
we define the convolution  functions $ (f_i^1, f_j^2)$  on each open set $B_i^1\times B_j^2$.  We call it the
product De Rham data.  \par
(2) Given $(B_1^i, B_2^j)\subset (U_1^i, U_2^j)$  the product  De Rham covering of $\mathcal X_1\times \mathcal X_2$ that 
has the given order denoted by
$$(U_1^{i_1}, U_2^{j_1}),  (U_1^{i_2}, U_2^{j_2}), \cdots, $$
we construct  new De Rham data on $\mathcal X_1$
by setting ordered De Rham covering as $$U_1^{i_1}, U_1^{i_2}, U_1^{i_3}, \cdots $$  and  on $\mathcal X_2$ by setting 
the De Rham covering as  
$$U_2^{j_1}, U_2^{j_2}, U_2^{j_3}, \cdots$$
These new De Rham data are called projection De Rham data
with respect to the product De Rham data.

\end{definition}

The important implication of these new De Rham data is the following projection formula.\bigskip

\begin{proposition}(Projection formula)
Let $\mathcal X_1\times \mathcal X_2$ be two  manifolds  equipped with a product De Rham data,
$X_2$ equipped with a projection De Rham data. Let  
 $P_2: \mathcal X_1\times \mathcal X_2\to  \mathcal X_2$ be the projection, and 
$\sigma\in \mathcal C(\mathcal X_2)$,  and $T\in \mathcal C(X_1\times X_2)$. Then\par

(1) \begin{equation}
R_\epsilon^{\mathcal X_1\times \mathcal X_2}(\mathcal X_1\otimes \sigma)=(P_2)^\ast (R_\epsilon^{\mathcal X_2} (\sigma)).
\end{equation}

(2) Let $\mathcal X_1$ be compact. Then

\begin{equation}
[(P_2)_\ast (T) \wedge \sigma]=(P_2)_\ast [T\wedge (\mathcal X_1\otimes \sigma)].
\end{equation}

\end{proposition}

\begin{proof}
(1). 
Assume $\mathcal X_1, \mathcal X_2$ are equipped with De Rham data, $\mathcal U_1, \mathcal U_2$ respectively.
Let's give a product De Rham data to $\mathcal X_1\times \mathcal X_2$ 
and projection De Rham data to $\mathcal X_1$ and $\mathcal X_2$.
Each chart in the product De Rham covering $(U_1^i, U_2^j)$ gives a De Rham's smoothing operators 
  on $\mathcal X_1\times \mathcal X_2$, denoted by $R_\epsilon^{\mathcal X_1\times \mathcal X_2, (i, j)}$. Each chart in  the projection 
De Rham covering $U_2^j$ gives a De Rham's smoothing operator  on $\mathcal X_2$, denoted by $R_\epsilon^{\mathcal X_2, j}$. 
We claim for any $\sigma\in \mathscr D'(\mathcal X)$, 
\begin{ass}
\begin{equation} R_\epsilon^{\mathcal X_1\times \mathcal X_2, (i, j)}(1\otimes \sigma)
=1\otimes R_\epsilon^{\mathcal X_2, j}(\sigma)\end{equation}
on $\mathcal X_1\times \mathcal X_2-\partial (B_i\times B_j)$.
\end{ass}
\noindent where $1$ is the current $\mathcal X_1$. 
Let's consider both sides of (2.34) restricted to a neighborhood of a point $(q_i, q_j)$ (both sides globally are not continuous, but in 
a neighborhood they are).  
If  $(q_i, q_j)\notin \bar B_1^i\times \bar B_2^j$. Then both sides are restricted to the $1\otimes \sigma$ in the neighborhood. 
 If $(q_i, q_j)\in  B_1^i\times  B_2^j$, let $\phi\in \mathscr D^{dim(\mathcal X_1)+dim(\sigma)}(\mathcal X_1\times \mathcal X_2)$ be supported in
the neighborhood.  
\begin{align*}
 &\int_{R_\epsilon^{\mathcal X_1\times \mathcal X_2, (i, j)}(1\otimes \sigma)}\phi\\
&=\int_{\mathbf x_1\in B_i} \int_{\mathbf x_2\in B_j} R_\epsilon^{\mathcal X_2, j}(\sigma)\wedge \phi(\mathbf x_1, \mathbf x_2)
\\ &=\int_{1\otimes R_\epsilon^{\mathcal X_2, j}(\sigma)}\phi
\end{align*}
( we should note that the partial degree of $ \phi(\mathbf x_1, \mathbf x_2)$ in $\mathbf x_1$ is maximal).
Then  we obtain that
(2.34) holds on the $$\mathcal X_1\times \mathcal X_2- \partial (B_i\times B_j).$$
 Continuing from Claim 2.9, we glue each piece $(U_1^i, U_2^j)$ by taking De Rham's    compositions of
smoothing operators on both sides of (2.34) to obtain that
\begin{equation}
 R_\epsilon^{\mathcal X_1\times \mathcal X_2}(1\otimes \sigma)
=1\otimes R_\epsilon^{\mathcal X_2}(\sigma)
\end{equation}
on $$\mathcal X_1\times \mathcal X_2-\cup_{i, j} \partial (B_i\times B_j),$$ 
where $R_\epsilon^{\mathcal X_2}$ is obtained from the projection De Rham data with respect to the product 
De Rham data.  
Now since both sides of (2.35) are $C^\infty$, by the continuity, (2.35) holds on 
the closure $\mathcal X_1\times \mathcal X_2$. 
This completes the proof of part (1). 

\par
(2).  Since $\mathcal X_1$ is compact, $P_2$ is proper. Then the pushforward $(P_2)_\ast$ of currents is well-defined. 
 Let $\phi$ be a test form on $\mathcal X_2$.  We use projection De Rham 
data on $\mathcal X_2$ and product De Rham data on $\mathcal X_1\times \mathcal X_2$ to find 
\begin{align*}
 &\int_{[(P_2)_\ast (T) \wedge \sigma]} \phi
=\displaystyle{\lim_{\epsilon\to 0}}\int_{  (P_2)_\ast T}  R_\epsilon^{\mathcal X_2}(\sigma)\wedge \phi\\
 &=\displaystyle{\lim_{\epsilon\to 0}}\int_T P_2^\ast ( R_\epsilon^{\mathcal X_2}(\sigma)\wedge \phi)\\
& =\displaystyle{\lim_{\epsilon\to 0}}\int_T  (1\otimes R_\epsilon^{\mathcal X_2}(\sigma))\wedge P_2^\ast (\phi)
\end{align*}
\begin{align*}
& (\text {Use part (1)})\\
&=\displaystyle{\lim_{\epsilon\to 0}}\int_T  R_\epsilon^{\mathcal X_1\times \mathcal X_2}
 (1\otimes \sigma)\wedge P_2^\ast (\phi)\\
&=\int_{[T\wedge (\mathcal X_1\times \sigma)]} P_2^\ast (\phi)
\\&=\int_{(P_2)_\ast[T\wedge (\mathcal X_1\times \sigma)]} \phi
\end{align*}

 This completes the proof.
\end{proof}

\bigskip

\begin{proposition} ( reduction to the diagonal)\quad 
Let $\mathcal X$ be a compact manifold. Let $T_1, T_2\in \mathcal C(\mathcal X)$. Let $\mathcal X$ be equipped with a De Rham data
$\mathcal U$. We give the product De Rham data to $\mathcal X\times \mathcal X$ and the associated  projection De Rham data to
each copy $\mathcal X$ \footnote{ Each copy $\mathcal X$ will have different De Rham data depending on the order of the product De Rham data.}.
With these De Rham data we have the reduction to diagonal,
\begin{equation}
[T_1\wedge T_2]=(P_2)_\ast [ (T_1\otimes T_2)\wedge \Delta]
\end{equation}
where $P_2: \mathcal X\times \mathcal X\to \mathcal X(2nd \ copy)$ is the projection, 
the left hand side of intersection occurs in the second copy of $\mathcal X$, and $\Delta$ is the diagonal.

\end{proposition}
\bigskip

\begin{proof} 
With induced  De Rham data there is  the projection formula (Proposition 2.8), 
\begin{equation}
[T_1\wedge T_2]=(P_2)_\ast [T_1'\wedge (\mathcal X\otimes T_2)]
\end{equation}
where $T_1'$ is a current in $\mathcal X\times \mathcal X$ such that 
$$(P_2)_\ast T_1'=T_1.$$
Now we claim 
\begin{ass}
 with  the projection De Rham data also on the first copy $\mathcal X$, 
\begin{align}
& (P_2)_\ast [(T_1\otimes \mathcal X)\wedge \Delta]=T_1\\
& (T_1\otimes \mathcal X)\wedge (\mathcal X\otimes T_2)=T_1\otimes T_2.
\end{align}
\end{ass}
For (2.38), we let $\phi\in \mathscr D(\mathcal X)$. 
Then

\begin{align*}
 &\int_{(P_2)_\ast [(T_1\otimes \mathcal X)\wedge \Delta]}\phi\\
&=\int_{ [(T_1\otimes \mathcal X)\wedge \Delta]}(P_2)^\ast (\phi)\\
&=\displaystyle{\lim_{\epsilon\to 0}} \int_{\Delta} R_\epsilon^{\mathcal X\times \mathcal X}
(T_1\otimes \mathcal X)\wedge (P_2)^\ast(\phi)\\
& (\text{ Use projection formula, Proposition 2.8} )\\
&=\displaystyle{\lim_{\epsilon\to 0}} \int_{\Delta} (P_1)^\ast (R_\epsilon^{\mathcal X}(T_1))\wedge (P_2)^\ast(\phi)\\
 &(\text{ where $P_1: \mathcal X\times \mathcal X\to \mathcal X(1st\ copy)$ is the projection then identify $\mathcal X\simeq \Delta$})\\
&=\displaystyle{\lim_{\epsilon\to 0}} \int_{\mathcal X} R_\epsilon^{\mathcal X}(T_1)\wedge \phi\\
&=\int_{T_1}\phi.
\end{align*}
Hence (2.38) is true. For (2.39), we let $\phi\in \mathscr D(\mathcal X)$.  Then by the projection formula
\begin{align}
 R_\epsilon^{\mathcal X\times \mathcal X} ( \mathcal X\times T_2)=R_{\mathcal X}(T_2)
\end{align}
Then
\begin{align}\begin{split}
& \int_{(T_1\otimes \mathcal X)\wedge (\mathcal X\otimes T_2)}\phi\\
&=\displaystyle{\lim_{\epsilon\to 0}} \int_{T_1\otimes \mathcal X} 
R_\epsilon^{\mathcal X\times \mathcal X}(1\otimes T_2)\wedge \phi \\
&=\displaystyle{\lim_{\epsilon\to 0}} \int_{T_1\otimes \mathcal X} 
1\otimes R_\epsilon^{ \mathcal X}(T_2)\wedge \phi\\
&=\displaystyle{\lim_{\epsilon\to 0}} \int_{\mathcal X} R_\epsilon^{ \mathcal X}(T_2)
\wedge \bigl((\int_{\mathbf x_1\in T_1} \phi(\mathbf x_1, \mathbf x_2)\bigr)\\
&=\int_{T_2} \int_{\mathbf x_1\in T_1} \phi(\mathbf x_1, \mathbf x_2)\\
&=\int_{T_1\otimes T_2}\phi. \end{split}\end{align}
Hence with projection De Rham data on both $\mathcal X$ and product De Rham data on $\mathcal X\times \mathcal X$,
Claim 2.11 is true.  Then in (2.37), we replace $T'$ by $(T_1\otimes \mathcal X)\wedge \Delta$ and apply the associativity and 
commutativity to have
\begin{align*}
 [T_1\wedge T_2] & =
(P_2)_\ast \biggl( [(T_1\otimes \mathcal X)\wedge \Delta\wedge (\mathcal X\otimes T_2)]\biggr)\\
&=(P_2)_\ast \biggl( [  (T_1\otimes \mathcal X)\wedge (\mathcal X\otimes T_2)\wedge\Delta]\biggr)\\
& (\text{ by (2.39)} )\\
&=(P_2)_\ast \biggl( [ (T_1\otimes T_2)\wedge \Delta ]\biggr).
\end{align*}

This completes the proof. 

\end{proof}

\bigskip

Let $i: \mathcal Z\hookrightarrow  \mathcal X$ be the inclusion of a  submanifold of dimension $n$.

\bigskip

\bigskip

\bigskip

\begin{proposition}(associativity)\quad \par
 There exist De Rham data $\mathcal U_{\mathcal Z}, \mathcal U_\mathcal X$ on $\mathcal Z, \mathcal X$ respectively  such that
for any cellular chain $\mathcal W\subset \mathcal Z$ and $\sigma\in \mathcal C(\mathcal X)$,
\begin{equation}
i_\ast ([\mathcal W\wedge [\sigma\wedge \mathcal Z]_{\mathcal Z}])=[i_\ast\mathcal W\wedge \sigma]
\end{equation}
where the notation for $[\sigma\wedge \mathcal Z]_{\mathcal Z}$ is defined in Proposition 2.5, and
 $\mathcal W$,  to abuse the notation,  denotes the current $\in\mathscr D'(\mathcal Z)$. 

\end{proposition}
\bigskip

\begin{proof}  Let $j: E\to \mathcal Z$ be a tubular neighborhood of $\mathcal Z$ in $\mathcal X$. 
Thus $E$ is diffeomorphic  to a vector bundle of rank $r$. We denoted the bundle also by $E$.  
Let $\mathcal U$ be a De Rham data for $\mathcal Z$ such that each De Rham chart $U_i$ lies in the trivialization. So
\begin{equation}
j^{-1}(U_i) =: U_i\times \mathbb R^r
\end{equation}
where $=:$ denotes a fixed diffeomorphism. Let $B\subset \mathbb R^r$ be the De Rham chart in the step 3 of the
construction of De Rham's regularization in [9].  Then $E$ is equipped with a De Rham data $\mathcal U_E$ whose De Rham covering is 
$j^{-1}(U_i), all\ i$.   Then we extend  $\mathcal U_E$ (arbitrarily) to 
$\mathcal X$ to have a De Rham data $\mathcal U_\mathcal X$.  We denote the smoothing 
operator associated to the chart $j^{-1}(U_i)$ by
$R_\epsilon^{\mathcal X, i}$. 
Let $\xi\in \mathscr D(\mathcal X)$ be a function that is 1 on $\mathcal W$ and 
has support in a neighborhood of $\mathcal Z$ inside of $E$.
Let $\mathcal W'=\xi j^{-1}(\mathcal W)$ be the current $\in \mathscr D'(\mathcal X)$.
By the associativity,
\begin{equation}
   [\mathcal W'\wedge [\mathcal Z\wedge \sigma]]=[[\mathcal W'\wedge \mathcal Z]\wedge \sigma].
\end{equation}
Notice that with vector bundle structure, $\mathcal W'$ in a neighborhood of $\mathcal W$ meets $\mathcal Z$ transversely at the 
open cells of $\mathcal W$.  By Proposition 3.1 below, $$[\mathcal W'\wedge \mathcal Z]=i_\ast\mathcal W.$$  so
\begin{equation}
  [\mathcal W'\wedge [Z\wedge \sigma]]=[i_\ast\mathcal W\wedge \sigma].\end{equation}

Let $\phi\in \mathscr D(\mathcal Z)$. By using the partition of unity, we may assume $\mathcal W$ lies in a single trivialization 
$$j^{-1}(U_i) =: U_i\times \mathbb R^r $$of
$E$.  Then we calculate  
\begin{align*}
& \int_{[ [Z\wedge \sigma]\wedge \mathcal W']}j^\ast(\phi) \\
&= \displaystyle{\lim_{(\epsilon, \epsilon')\to \mathbf 0}}
\int_{\mathcal Z} R_\epsilon^{\mathcal X}(\sigma)\wedge R_{\epsilon'}^{\mathcal X}(\mathcal W')\wedge j^\ast(\phi)\\
&= \displaystyle{\lim_{(\epsilon, \epsilon')\to \mathbf 0}}
\int_{\mathcal Z} R_\epsilon^{\mathcal X}(\sigma)\wedge R_{\epsilon'}^{\mathcal X, i_1}\circ\cdots \circ R_{\epsilon'}^{\mathcal X, i_h} (\mathcal W')\wedge j^\ast(\phi)\\ &
(\text {where $i_1<\cdots <i_h$, and apply Claim 2.9 to the trivialization (2.43) })
\end{align*}
\begin{align}\begin{split}
& =\displaystyle{\lim_{(\epsilon, \epsilon')\to \mathbf 0}}
\int_{\mathcal Z} R_\epsilon^{\mathcal X}(\sigma)\wedge (1\otimes R_{\epsilon'}^{\mathcal Z, i_1}\circ\cdots \circ R_{\epsilon'}^{\mathcal Z, i_h} (\mathcal W))\wedge j^\ast(\phi)\\
&=\displaystyle{\lim_{(\epsilon, \epsilon')\to \mathbf 0}}
\int_{\mathcal Z} R_\epsilon^{\mathcal X}(\sigma)\wedge (1\otimes R_{\epsilon'}^{\mathcal Z}(\mathcal W))\wedge j^\ast(\phi)\\
&=\displaystyle{\lim_{ \epsilon'\to  0}}
\int_{[\mathcal Z\wedge \sigma]} (1\otimes R_{\epsilon'}^{\mathcal Z}(\mathcal W))\wedge j^\ast(\phi)\\
&=
\int_{[[\mathcal Z\wedge \sigma]\wedge i_\ast\mathcal W]} j^\ast (\phi)\\
&=\int_{[[\mathcal Z\wedge \sigma]_\mathcal Z\wedge \mathcal W]} \phi
\end{split}\end{align}
Hence $$[ \mathcal W'\wedge  [Z\wedge \sigma]]=i_\ast [\mathcal  W\wedge[\mathcal Z\wedge \sigma]_{\mathcal Z}].$$
Combining with (2.45), we complete the proof. 
\end{proof}
\bigskip

\begin{definition} Any pair of De Rham data such as $\mathcal U_{\mathcal Z}, \mathcal U_{\mathcal X}$ satisfying  
(2.42) for any $\mathcal W$ will be called the inclusion De Rham data.

\end{definition}
\bigskip

\bigskip

\section{ Dependence of the local data}

\par

The intersection of currents is extrinsic because De Rham data is extrinsic. The real intersection theory
stresses inseparable nature of ``intrinsic" and ``extrinsic".  
  Let's look into the dependence of the
De Rham data to see how much is intrinsic and how much is extrinsic. We begin with the real case.
 \bigskip

\subsection  {  Real case}
\medskip

It is well-known that on a manifold, if two sub manifolds meet transversally at another submanifold, then the intersection
 should be defined as the intersectional manifold.  The more useful version is its extension in algebraic geometry.  The following proposition says that the tranversal intersection is a  particular case of the intersection of currents, where
 the dependence of De Rham data disappears. 
\bigskip

\begin{proposition} Let $\mathcal X$ be a manifold of dimension $m$.
If $T_1, T_2$ are cells of real dimension $p, q$ with $p+q\geq m$,  and the intersection $T_1\cap T_2$ is transversal at a connected, 
 manifold
$V$ of dimension $p+q-m$.  We assume $V$  at each point can be oriented concordant with $T_1, T_2$.  Then 
 $[T_1\wedge T_2]$ is independent of $\mathcal U$. Furthermore  it is the current of integration over $V$.

\end{proposition}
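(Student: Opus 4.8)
The plan is to reduce everything to a local computation near the intersection locus $v$ and to show that transversality forces the smoothing of $T_2$, after restriction to $T_1$, to converge to the delta-current on $v$, with the shape of the bump functions washing out in the limit.

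First I would localize. By the supportivity property (Property 2.12(1)), $[T_1\wedge T_2]$ is carried by $supp(T_1)\cap supp(T_2)$, which away from the boundaries of the two cells is exactly $v$. Since cells are Lebesgue currents (Example 2.7(1)), Theorem 2.8 already guarantees that the limit defining $[T_1\wedge T_2](\phi)$ exists, so the only content is to identify it. Thus it suffices to fix $p\in v$, a test form $\phi$ supported in a small neighborhood $U$ of $p$, and to evaluate $\lim_{\epsilon\to 0}\int_{T_1}R_\epsilon T_2\wedge\phi$. By transversality I may choose coordinates $x=(x_1,\dots,x_m)$ on $U$ centered at $p$ in which $T_1=\{x_{i+1}=\cdots=x_m=0\}$, $T_2=\{x_1=\cdots=x_{m-j}=0\}$, and hence $v=\{x_1=\cdots=x_{m-j}=0,\ x_{i+1}=\cdots=x_m=0\}$, an $(i+j-m)$-plane; the inequality $i+j\ge m$ is exactly what makes the $m-j$ normal directions of $T_2$ lie among the $i$ tangent directions of $T_1$.

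Next I would analyze $R_\epsilon T_2$. Splitting the ambient variables into a normal block $u=(x_1,\dots,x_{m-j})$ and a tangential block $w=(x_{m-j+1},\dots,x_m)$ for $T_2$, the de Rham smoothing of the flat current $[T_2]=\delta(u)\,dx_1\wedge\cdots\wedge dx_{m-j}$ is represented, in the first (translation) step, by the smooth form $g_\epsilon(u)\,dx_1\wedge\cdots\wedge dx_{m-j}$, where $g_\epsilon(u)=\int_{\mathbb R^{j}} f^\epsilon(u,w)\,dw$ is the marginal of the mollifier over the tangential directions. The normalization $\int_{\mathbb R^m}f=1$ of equation (2.3) gives $\int_{\mathbb R^{m-j}}g_\epsilon(u)\,du=1$, so $g_\epsilon(u)\,du$ is itself a unit-mass approximate identity in the $m-j$ normal variables. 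Restricting this form to $T_1$ (where $x_1,\dots,x_{m-j}$ survive because $m-j\le i$), wedging with $\phi$, and keeping only the component $\psi\,dx_{m-j+1}\wedge\cdots\wedge dx_i$ of $\phi|_{T_1}$ that completes the top form, a Fubini argument reduces the integral to $\int g_\epsilon(u)\,\psi(u,x_{m-j+1},\dots,x_i)\,du\,dx_{m-j+1}\cdots dx_i$. Letting $\epsilon\to0$, the factor $g_\epsilon(u)\,du$ collapses to the delta at $u=0$, leaving $\int_v\phi$. The sign bookkeeping in the reordering $dx_1\wedge\cdots\wedge dx_{m-j}\wedge dx_{m-j+1}\wedge\cdots\wedge dx_i=dx_1\wedge\cdots\wedge dx_i$ is precisely the orientation obtained by concatenating the normal frame of $T_2$ with the tangent frame of $v$, i.e. the concordant orientation, so the limit is the integration current over $v$. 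Independence of $\mathcal U$ is then immediate: the limit used only that $g_\epsilon\,du$ is a unit-mass approximate identity, a consequence of the single normalization $\int f=1$, while every other feature of $\mathcal U$ enters only through quantities spread over the shrinking $\epsilon$-tube and hence disappears. This is exactly the mechanism that fails in the non-transversal Examples 3.2 and 3.3, where the shape of $h$ survives.

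The main obstacle is that the genuine operator $R_\epsilon$ is not the single convolution of the first step but the infinite composition $R^1_\epsilon R^2_\epsilon\cdots$ glued over the cover of $\mathcal U$ (Step 3 of Theorem 2.2), carried out in the fixed ambient coordinates of $\mathcal U$, which need not be adapted to the tangent planes of $T_1,T_2$. I would handle this in two moves. First, since supports shrink into any prescribed neighborhood of $supp(T_2)$ (Definition 2.1(2)), only the finitely many charts meeting a fixed neighborhood of $p$ contribute, so the composition is effectively finite there. Second, I would show that each local factor again replaces a delta in its normal variables by a unit-mass marginal mollifier, and that the error terms measuring the deviation of $T_1,T_2$ from the flat model and of the smoothing directions from the coordinate planes are $O(\epsilon)$ and integrate against the shrinking tube to $0$, hence do not affect the limit. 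Controlling these error terms uniformly, and verifying that the composed marginals still integrate to $1$ — which is where transversality, through $i+j\ge m$, is essential — is the technical heart; everything else is the Fubini reduction above.
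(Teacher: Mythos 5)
Your proposal is correct and follows essentially the same route as the paper's proof of Proposition 3.4: reduce to a flat local model in coordinates adapted to $T_1,T_2,v$, observe that the smoothing of $T_2$ paired against $T_1$ becomes (after the scaling change of variables) a unit-mass marginal mollifier in the normal directions of $T_2$, so only the normalization $\int f\, vol=1$ survives the limit, yielding $\pm\int_v\phi|_v$ and the independence of $\mathcal U$. The only real difference is one of care: the paper absorbs the mismatch between the de Rham chart coordinates and the adapted coordinates into a single automorphism $h$ via the map $\kappa(x,y)=h(x)-h(y)$ and computes with one local operator, silently ignoring the glued (infinite) composition in Step 3 of Theorem 2.2, whereas you flag that gluing issue explicitly and outline the $O(\epsilon)$ error estimates needed to dispose of it.
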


 \bigskip

\begin{proof} Let's set up the coordinates for the cells. 
Let  $\mathcal  X=\mathbb R^{m}$ have
 linear basis $\mathbf e_1, \cdots, \mathbf e_m$ 
and coordinates $x_1, \cdots, x_{m}$. 
Set up the subspaces, 
\begin{align*} & \mathbb R^p=span(\mathbf e_1, \cdots, \mathbf e_p)\\
 &\mathbb R^q=span(\mathbf e_{m-q+1}, \cdots,\mathbf  e_m)\\
& \mathbb R^{p+q-m}=span(\mathbf e_{m-q+1}, \cdots, \mathbf e_{p})
\end{align*}
Let $T_1=\Delta^p\subset \mathbb R^p$ be the polyhedron defined by
\begin{equation}
\{\sum_{i=1}^p |x_i| <1\}
\end{equation}
Similarly $T_2=\Delta^q$ is defined by 
\begin{equation}
\{\sum_{i=m-q+1}^m |x_i| <1\},
\end{equation}
 $V=\Delta^p\cap \Delta^q$ is defined by 
\begin{equation}
\{\sum_{i=m-q+1}^{p} |x_i |<1\}.
\end{equation}
Let $\pi_{p+q-m}: \mathbb R^m\to V$ be the projection. 
The proof has two steps.\medskip

$\mathbf 1 st$ step: 
Notice $[T_1\wedge T_2]$ has a compact support, hence 
$[T_1\wedge T_2]$ is also evaluated at the forms in $C^\infty(\mathbb R^m)$.
Let $\phi\in C^\infty(\mathbb R^m)$ have degree $p+q-m$.  Let $\xi\in \mathscr D_0(\mathbb R^m)$ that is equal to $1$ 
on a bounded  neighborhood $K$ of $\Delta^p\cap \Delta^q$.  We consider a $C^\infty$ form
\begin{equation}
\xi (\phi-\pi_{p+q-m}^\ast(\phi|_{V}))
\end{equation}
which has a compact support in $K$ and is a limit of $C^\infty$ forms
$\psi_n\in \mathscr D(K)$ compactly supported in $K-\bar V$ as $n\to \infty$. 
Because $[T_1\wedge T_2]$ is supported on $V$, but $\psi_n$ is supported outside of $V$,  
$$\int_{[T_1\wedge T_2]}\psi_n=0$$ for all $n$. 
By the continuity of the functional $[T_1\wedge T_2]$, 
\begin{equation}
\int_{[T_1\wedge T_2]}\phi=\int_{[T_1\wedge T_2]}\pi_{p+q-m}^\ast(\phi|_{V}).
\end{equation}
Recall that the $C^\infty$ form $\pi_{p+q-m}^\ast(\phi|_{V})$ is called a local constant slicing in [9]. Therefore it is a closed form.

\bigskip

Now we apply the homotopy formula (2.7). Let $\phi\in \mathscr D(\mathbb R^m)$ such that
$$supp(\phi)\cap \bigl(\partial (\Delta^p)\cup \partial (\Delta^q)\bigr)=\varnothing.$$
For arbitrary De Rham's regularization  $R'_\epsilon, A_\epsilon'$ with fixed sufficiently small real numbers $\epsilon_1, \epsilon_2$, we apply the homotopy formula (2.7) to have 
\begin{align}
& \int_{[T_1\wedge T_2]}\pi_{p+q-m}^\ast(\phi|_{V})\\ &
=\int_{[(bA_{\epsilon_1}'T_1+A_{\epsilon_1}' bT_1)\wedge (bA_{\epsilon_2}'T_2+A_{\epsilon_2}' bT_2)] }\pi_{p+q-m}^\ast(\phi|_{V})
\\ &+ \int_{[R_{\epsilon_1}'T_1\wedge R_{\epsilon_2}'T_2] }\pi_{p+q-m}^\ast(\phi|_{V})
\end{align}
Now we calculate the first integral  (3.7)
\begin{align*}
 & \int_{[(bA_{\epsilon_1}'T_1+A_{\epsilon_1}' bT_1)\wedge (bA_{\epsilon_2}'T_2+A_{\epsilon_2}' bT_2)] }\pi_{p+q-m}^\ast(\phi|_{V})
\\
&=\displaystyle{\lim_{\epsilon\to 0}}   
\int_{[R_\epsilon (bA_{\epsilon_1}'T_1+A_{\epsilon_1}' bT_1)\wedge R_\epsilon ( bA_{\epsilon_2}'T_2+A_{\epsilon_2}' bT_2)] }\pi_{p+q-m}^\ast(\phi|_{V})\\
& (\text {because  $supp(bT_i)\cap supp(\phi)=\varnothing$.})\\ 
&=\displaystyle{\lim_{\epsilon\to 0}}   
\int_{[b R_\epsilon A_{\epsilon_1}'T_1\wedge  b R_\epsilon A_{\epsilon_2}'T_2] }\pi_{p+q-m}^\ast(\phi|_{V})\\
&=\pm \displaystyle{\lim_{\epsilon\to 0}}   
\int_{[R_\epsilon A_{\epsilon_1'} T_1\wedge   R_\epsilon A_{\epsilon_2}'T_2] } d (\pi_{p+q-m}^\ast(\phi|_{V}))
\\& =0
\end{align*}

This shows that
\begin{equation} 
\int_{[T_1\wedge T_2]}\pi_{p+q-m}^\ast(\phi|_{V})
=\int_{[R_{\epsilon_1}'T_1\wedge R_{\epsilon_2}'T_2] }\pi_{p+q-m}^\ast(\phi|_{V}).
\end{equation}
We observe that the right hand side of (3.9) does not involve the De Rham's smoothing operator $R_\epsilon$, thus 
 the current $[T_1\wedge T_2]$ is independent of the choice of De Rham data $\mathcal U$.
 \footnote {The technique of using the arbitrary $R_\epsilon'$  is due to  De Rham \S 20,  [2].}
\medskip

$\mathbf 2 nd$ step:  To calculate the intersection $[T_1\wedge T_2]$.  By the 1st step,  we can choose a particular  De Rham's data $\mathcal U$ that has 
one chart $x_1, \cdots, x_m$ for $\mathbb R^m$. Also we choose a $C^\infty$ convolution function $f(\mathbf x)$ supported in a neighborhood of
a unit ball $B$ satisfying
\begin{equation}
\int_{\mathbb R^m}f(\mathbf x)d\mu=1.
\end{equation}
where $d\mu=dx_1\wedge\cdots\wedge dx_m$. 
Let $\vartheta_\epsilon(x)=f ({\mathbf x\over\epsilon})  d{\mu\over \epsilon}$. 

Let
\begin{equation}\begin{array}{ccc}
\kappa: \mathbb R^m\times \mathbb R^m &\rightarrow & \mathbb R^m\\
(\mathbf x, \mathbf y) &\rightarrow &\mathbf  x-\mathbf y ,
\end{array}\end{equation}
Denote the coordinates  $(x_1, \cdots, x_{m-q})$ by $ \mathbf x_1$, $(x_{m-q+1}, \cdots, x_p)$ by $\mathbf x_2$ and
$x_{i+1}, \cdots, x_m$ by $\mathbf x_3$. 
Similarly for the second copy of $\mathbb R^m$ in (3.11), the corresponding coordinates  
are denoted by $\mathbf y_1, \mathbf y_2, \mathbf y_3$ respectively. 

Let \begin{equation}g({\mathbf x_1\over \epsilon}, {\mathbf x_2\over \epsilon}, {\mathbf x_3\over \epsilon}, 
{\mathbf y_1\over \epsilon}, {\mathbf y_2\over \epsilon}, {\mathbf y_3\over \epsilon})=\kappa^\ast (\vartheta_\epsilon)
.\end{equation}
Let $\phi\in \mathscr D(\mathbb R^m)$ be a test form. 
 Then we calculate 
the current 

\begin{align}\begin{split} 
 &\int_{[T_1\wedge T_2]}\phi=\lim_{\epsilon\to 0}\int_{T_1} R_\epsilon(T_2)\wedge \phi\\
&=\lim_{\epsilon\to 0}\int_{T_1} \int_{(\mathbf y_2, \mathbf y_3)\in T_2}
g({\mathbf x_1\over \epsilon}, {\mathbf x_2\over \epsilon}, 0, 
0, {\mathbf y_2\over \epsilon}, {\mathbf y_3\over \epsilon}) \wedge \phi(\epsilon {\mathbf x_1\over \epsilon} , \mathbf x_2, 0 )
\end{split}\end{align} 
where $\phi(\epsilon {\mathbf x_1\over \epsilon} , \mathbf x_2, 0 ) $
is a test form, i.e. $C^\infty$ form on $T_1$ with a compact support.  
Now applying   the fibre integral to that over $T_1$,  we obtain 
\begin{align}\begin{split}
 & \int_{[T_1\wedge T_2]}\phi\\
&=\lim_{\epsilon\to 0}\int_{\mathbf x_2\in \mathbb R^{i+j-m}}\int_{ \mathbf x_1\in \mathbb R^{m-j}}
\int_{(\mathbf y_2, \mathbf y_3)\in \mathbb R^j}
g({\mathbf x_1\over \epsilon}, {\mathbf x_2\over \epsilon}, 0, 
0, {\mathbf y_2\over \epsilon}, {\mathbf y_3\over \epsilon})\wedge \phi(\epsilon {\mathbf x_1\over \epsilon} , \mathbf x_2, 0 )
\end{split}\end{align} 
Then we make a change of variables,
\begin{equation}\begin{array}{c}
{\mathbf x_1\over \epsilon}\to \mathbf x_1, \\
{\mathbf y_2\over \epsilon}\to \mathbf y_2\\
{\mathbf y_3\over \epsilon}\to \mathbf y_3.
\end{array}\end{equation}
Then \begin{align}\begin{split}
 & \int_{[T_1\wedge T_2]} \phi\\
 & =\pm  \lim_{\epsilon\to 0}\int_{\mathbf x_2\in \mathbb R^{i+j-m}}\int_{ \mathbf x_1\in \mathbb R^{m-j}}
\int_{(\mathbf y_2, \mathbf y_3)\in \mathbb R^j}
g(\mathbf x_1, {\mathbf x_2\over \epsilon}, 0, 
0, \mathbf y_2, \mathbf y_3)\wedge \phi(\epsilon \mathbf x_1, \mathbf x_2, 0 )
\end{split}\end{align} 
 Then we notice for each fixed $\mathbf x_2$, 
 the fibre integral
\begin{equation}  \int_{\mathbf y_2, \mathbf y_3\in \mathbb R^j, \mathbf x_1\in \mathbb R^{m-j}}
g({\mathbf x_1}, {\mathbf x_2\over \epsilon}, 0, 
0, {\mathbf y_2}, {\mathbf y_3})
\end{equation}
by formula (3.10),   is 1.  Therefore we obtain that 
\begin{equation}
[T_1\wedge T_2](\phi)=\int_{ \mathbb R^{i+j-m}}\phi(0, \mathbf x_2, 0)
\end{equation}

Thus
\begin{equation}\begin{array}{c} 
[T_1\wedge T_2](\phi)=\int_{V} \phi|_V.
\end{array}\end{equation} 
where $\phi|_V$ is the restriction $\phi(0, \mathbf x_2, 0)$ of $\phi$ to $V$. 
We complete the proof.

\end{proof}

For non transversal intersection, there is no classification in general situation because the intersection may not have 
the notion of dimension. But for some specific  examples, the notion of the dimension exists. 

\bigskip

\begin{proposition} (real excess intersection)
The  intersection of currents $[T_1\wedge T_2]$ depends on $\mathcal U$.

\end{proposition}
\bigskip

\begin{proof}  We prove it by using an example. 
Let $\mathcal X=\mathbb R^2$, and  be equipped with the De Rham data consisting of  single chart $\mathbb R^2$ 
with  the convolution  function 
$f$ satisfying 
\begin{equation}
\int_{\mathbb R^2} f (x_1, x_2)dx_1\wedge dx_2=1
\end{equation}
where $x_1, x_2$ are Euclidean coordinates of $\mathbb R^2$.  Let $T_1=T_2$ be the current of integration 
over the finite piece of the parabola 
\begin{equation}
x_1=x_2^2
\end{equation}
containing the origin $\bf 0$.
Since $T_1, T_2$ are singular chains,  
 $[T_1\wedge T_2]$ exists.  
Let $\phi(x)$ be a test function with a compact support.  Denote the second copy of $\mathbb R^2$ for the De Rham's regularization 
by $y_1, y_2$.
Then we calculate

\begin{equation}\begin{array}{c}
\int_{[T_1\wedge T_2]}\phi\\
\|\\
\displaystyle{\lim_{\epsilon\to 0}} {1\over \epsilon^2}\int_{x\in T_1}\int_{y\in T_2}
f({x_1-y_1\over \epsilon}, {x_2-y_2\over \epsilon})\phi(x_1, x_2) (dx_1-dy_1)\wedge (dx_2-dy_2) 
\end{array}\end{equation}
substitute $x_1=x_2^2, y_1=y_2^2$ for $T_1, T_2$, we obtain that

\begin{equation}\begin{array}{c}
\int_{[T_1\wedge T_2]}\phi\\
\|
\\
\displaystyle{\lim_{\epsilon\to 0}} {2\over \epsilon^2}\int_{x_2\in \mathbb R}\int_{y_2\in \mathbb R}
f({(x_2-y_2)(x_2+y_2)\over \epsilon}, {x_2-y_2\over \epsilon})\phi(x_1, x_2) (x_2-y_2)dy_2\wedge dx_2.\end{array}
\end{equation}
Next we make a change of the variables
\begin{equation}\begin{cases}
u={(x_2-y_2)\over \epsilon}\\
v=x_2+y_2
.\end{cases}
\end{equation}
Then 
\begin{equation}\begin{array}{c}
[T_1\wedge T_2](\phi)\\
\|
\\
\displaystyle{\lim_{\epsilon\to 0}} \int_{u\in \mathbb R}\int_{v\in \mathbb R}
u f(uv, u)  \phi ( ({\epsilon u+v\over 2})^2, {\epsilon u+v\over 2}) dv\wedge du\\
\|\\
 \int_{(u, v)\in \mathbb R^2}
u f(uv, u) \phi ( ({v\over 2})^2, {v\over 2}) dv\wedge du
.\end{array}
\end{equation}
Then the functional \begin{equation}
\phi\to \int_{(u, v)\in \mathbb R^2}
u f(uv, u) \phi ( ({v\over 2})^2, {v\over 2}) dv\wedge du\end{equation}
defines a current supported on $T_1$.   So the intersection current 
$$[T_1\wedge T_2]$$(which is (3.26))
  is
  supported on $T_1$, depending  on the convolution  function $f$. 

\end{proof}

\bigskip

\bigskip

\begin{ex} (real proper intersection)

Let $\mathcal X=\mathbb R^2$  be equipped with the De Rham data consisting of  single chart $\mathbb R^2$ with  the convolution  function 
$f$ satisfying 
\begin{equation}
\int_{\mathbb R^2} f (x_1, x_2)dx_1\wedge dx_2=1
\end{equation}
where $x_1, x_2$ are Euclidean coordinates of $\mathbb R^2$.
\bigskip

Case 1: Let $T_1$ be  a line through the origin $\mathbf 0$ and 
$T_2$ is another line segment through the origin.  Then it is known that
$$[T_1\wedge T_2]=\delta_{\mathbf 0}$$
if the order matches with the orientation of $\mathbb R^2$.
(for instance see Proposition 3.1).

\bigskip

Case 2: Continuing from the setting in case 1, 
let $T_2$ be the line $x_1=0$. Let $T_1$ be a pieces of  parabola
\begin{equation}
x_1=x_2^2, x_2\in (-1, 1).
\end{equation}
Let's calculate $[T_1\wedge T_2]$. 
Let $\phi(x)$ be a test function supported in a neighborhood of  the origin.  
We denote the second copy of $\mathbb R^2$ for De Rhams' regularization by $(y_1, y_2)$. Then 
\begin{align}\begin{split} 
 &\int_{[T_1\wedge T_2]}\phi\\
&=
\displaystyle{\lim_{\epsilon\to 0}} {1\over \epsilon^2}\int_{x_1\in T_1}\int_{y_2\in \mathbb R}
f({x_1\over \epsilon}, {x_2\over \epsilon}-{y_2\over \epsilon})\phi(x_1, x_2) dy_2\wedge dx_1.
\end{split}\end{align}
Let  $$f_1(x_1)=\int_{y_2\in\mathbb R} f(x_1, - y_2) dy_2.$$
Now we continue (3. 29) to have 
\begin{equation}\begin{array}{c}
\int_{[T_1\wedge T_2]}\phi\\ \|\\
\displaystyle{\lim_{\epsilon\to 0}} {1\over \epsilon}\int_{(x_1, x_2)\in T_1}
f_1({x_1\over \epsilon})\phi(x_1, x_2) dx_1\\
\|\\
\phi(\mathbf 0)\biggl (\int_{+\infty}^{0} f_1(x_1)dx+\int_{0}^{+\infty} f_1(x_1)dx_1\biggr)=0,
\end{array}\end{equation}

So 
$$[T_1\wedge T_2]=0$$ for all convolution  function $f$ in the De Rham data.  This example shows
the formula
$$supp([T_1\wedge T_2])=supp(T_1)\cap supp(T_2)$$
does not hold for singular chains. \bigskip

Case 3: Continuing from the setting in case 2, 
 let $T_2$ be the line $x_1=0$. Let $T_1$ be a piece of  the cubic curve
\begin{equation}
x_1=x_2^3, x_2\in (-1, 1).
\end{equation}
The same calculation in case 2  shows if order of $T_1, T_2$ is concordant with orientation of $\mathbb R^2$, then
\begin{equation}
\int_{[T_1\wedge T_2]}\phi=\phi(\mathbf 0).
\end{equation}
Hence 

\begin{equation}
[T_1\wedge T_2]=\delta_{\mathbf 0}
\end{equation}
where $\delta_{\mathbf 0}$ is the $\delta$-function at the origin. So the intersection is independent choice of
convolution  function $f$ in De Rham data. 
\end{ex}

\bigskip

{\bf Remark} All three cases in Example 3.3 belong to the case of ``real proper intersection" (not fully defined) which by the step 1 of 
Proposition 3.1  is independent of De Rham data.  They also  coincide with Kronecker index $T_1\wedge T_2[1]$ defined by
De Rham. 
However the multiplicity associated to the ``proper" components 
 is different from that of the complex cases.   Thus such an intrinsic  problem of determination 
of the multiplicity has no satisfactory answer.

\bigskip

\subsection{  Complex case}

\bigskip

\bigskip

\begin{proposition} Let $f: X\to Y$ be a regular map between two smooth projective varieties.
Let $W$ be a $p$ dimensional algebraic cycle of $ X$.
Then the current $f_\ast [W]$ is the current of integration over
the cycle 
$$f_\ast W,$$
where $[W]$ stands for the current of integration over the algebraic set.

\end{proposition}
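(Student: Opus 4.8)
The plan is to evaluate the current $f_{\ast}[W]$ on test forms and match it termwise with integration over the cycle $f_{\ast}W$. Since $X$ and $Y$ are projective, $f$ is proper, so the pushforward of currents is given for a test form $\phi$ on $Y$ by $(f_{\ast}[W])(\phi)=[W](f^{\ast}\phi)=\int_{W}f^{\ast}\phi$; as $f^{\ast}\phi$ is automatically compactly supported on the compact $X$, this is well defined. The relevant $\phi$ has real degree $2p$, matching the real dimension of both $W$ and $f_{\ast}W$. By linearity I may take $W$ to be a single irreducible subvariety of complex dimension $p$ and integrate over its smooth locus $W_{\mathrm{reg}}$, whose complement is a proper Zariski-closed set, hence of real codimension at least two and of vanishing Lebesgue measure.

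First I would dispose of the degenerate case $\dim_{\mathbb{C}}f(W)<p$. At each smooth point the restricted differential $df|_{W}$ is complex-linear of complex rank at most $\dim_{\mathbb{C}}f(W)<p$, so its real rank is $<2p$ and the pullback of the $2p$-form $\phi$ vanishes almost everywhere on $W$. Hence $\int_{W}f^{\ast}\phi=0$, which matches the cycle-theoretic convention $f_{\ast}W=0$ and therefore $[f_{\ast}W]=0$.

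The substantive case is $f|_{W}$ generically finite onto $V=f(W)$ of some degree $d$, where $f_{\ast}W=d\cdot V$ and I must recover the multiplicity $d$ analytically. I would choose a Zariski-dense open $V^{\circ}\subset V$ over which $f$ restricts to an unramified $d$-sheeted covering of $W_{\mathrm{reg}}\cap f^{-1}(V^{\circ})$ onto $V^{\circ}$; the excised loci (the singular locus of $W$, the ramification divisor, and the non-normal locus of $V$) are proper Zariski-closed, hence negligible. On each of the $d$ sheets $f$ is a local biholomorphism, so the change-of-variables formula gives $\int_{\mathrm{sheet}}f^{\ast}\phi=\int_{V^{\circ}}\phi$, and summing over sheets yields $\int_{W}f^{\ast}\phi=d\int_{V}\phi=\int_{f_{\ast}W}\phi$. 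In both cases $(f_{\ast}[W])(\phi)=[f_{\ast}W](\phi)$ for every test form $\phi$, so the two currents coincide.

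The main obstacle is the rigorous bookkeeping that these exceptional sets contribute nothing and that the $d$ sheets add rather than cancel. The first point rests on the standard fact that a proper Zariski-closed subset of a complex variety has real codimension at least two, so it affects none of the integrals above. The second is where the complex hypothesis is essential: a holomorphic local biholomorphism preserves the canonical complex orientation, so every sheet contributes with the same sign and the multiplicity appears as $+d$, in contrast to the $\pm$ sign ambiguity and cancellations encountered in the real case treated above.
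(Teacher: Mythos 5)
Your proposal is correct and follows essentially the same route as the paper's proof: reduce by linearity, show the pushforward current vanishes when the image dimension drops, and in the generically finite case use the local isomorphism (unramified sheets over a Zariski-dense open set) to recover the multiplicity $d$ as the number of sheets. Your write-up is in fact more careful than the paper's on the points it leaves implicit — the rank argument for the degenerate case, the measure-zero bookkeeping for the excised loci, and the observation that holomorphy forces all sheets to contribute with the same sign.
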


\bigskip

\begin{proof}
Let $W=\sum_i a_iW_i$ where $W_i$ are irreducible and $a_i$ are non-zero integers.  Let
  $f_\ast W=\sum_i b_i S_i$ 
where $b_i$ are non-zero integers divisible by non-zero $a_i$ and $S_i$ are irreducible subvarities.
Let $|W_0|$ be the open sets of the support  $|W|$ such that $f$ is smooth.
Then correspondingly $f(|W_0|)=\cup_i S_i^0$, where $S_i^0$ are open sets of $S_i$. 
By the definition of the push-forward of algebraic cycles, 
the map
\begin{equation}\begin{array}{ccc}
f: W_i &\rightarrow & S_i
\end{array}\end{equation}
is a finite to one morphism, and $f$ is restricted to an e\'tal morphism on $f^{-1}(S_i^0)$. 
Then using currents, we have 
\begin{equation}
f_\ast [f^{-1}(S_i^0)]={b_i\over a_i} S_i^0.
\end{equation}
Taking the closure and the sum  over $i$, we obtain
that
\begin{equation}
 f_\ast (\sum_i a_i[W_i])=\sum_i b_i [S_i].
\end{equation}
Since for algebraic cycles, we have
\begin{equation}
 f_\ast (\sum_i a_i W_i )=\sum_i b_i S_i,
\end{equation}
we complete the proof. 

\end{proof}

\bigskip

\begin{theorem} Let $X$ be a smooth projective variety of dimension $n$ over $\mathbb C$. 
 Let $T_1, T_2$ be irreducible subvarieties of $X$ of dimension $p, q$.  To abuse the notations, 
the currents of integration over them are also denoted by $T_1, T_2$
respectively.  Assume  $T_1\cap  T_2$ is proper. Then with an arbitrary  De Rham data $\mathcal U$ on $X$, 
the current $[T_1\wedge T_2]$ is independent of $\mathcal U$, and  equals to the current of integration
over the algebraic cycle $$T_1\cdot T_2,$$
where $T_1 \cdot T_2$ is the Fulton's intersection ([3]) defined as the linear combination of
all irreducible components of the scheme 
$$T_1\cap T_2.$$
Furthermore the assertion  extends to algebraic cycles linearly. 
\end{theorem}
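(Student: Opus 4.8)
The plan is to establish $[T_1 \wedge T_2] = \sum_i m_i[W_i]$, where the $W_i$ are the irreducible components of $T_1 \cap T_2$ and the $m_i$ are Fulton's multiplicities, by first pinning down the analytic shape of the intersection current and then computing its coefficients as local topological degrees, which are intrinsic and therefore independent of $\mathcal{U}$. First I would fix the global form of the current. The integration currents $T_1, T_2$ are closed, compactly supported, and Lebesgue (Example 2.7(1)); hence $[T_1 \wedge T_2]$ is closed by the topologicity property 2.12(2) and Lebesgue by Proposition 2.10, and by supportivity (Property 2.12(1)) it is supported on $T_1 \cap T_2 = \bigcup_i W_i$, where properness forces each $W_i$ to have complex dimension $p+q-n$, exactly the dimension of the current. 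A closed Lebesgue current supported on an analytic set of its own dimension is necessarily a finite sum $\sum_i c_i[W_i]$ with constant coefficients: the Lebesgue regularity of Definition 2.6 forces its density to be a bounded function rather than a distribution, and closedness forces that function to be constant on each irreducible $W_i$. This is precisely the regularity that fails in the real non-transversal Examples 3.2--3.3, where the answer depends on the data and delta-type terms survive; in the complex proper case it holds, and the theorem reduces to the computation of each $c_i$.

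Next I would localize the coefficient $c_i$ to a transverse slice at a generic smooth point $x \in W_i$. There $W_i$, $T_1$ and $T_2$ are all smooth, and I would choose local complex coordinates splitting the directions tangent to and transverse to $W_i$, exactly as in the coordinate setup of Lemma 2.14 and Proposition 3.4. Writing the regularized integral $\int_{T_1} R_\epsilon T_2 \wedge \phi$ as an iterated (fibre) integral, integrating first in the tangential directions, the bump function integrates to $1$ there — just as in the proof of Proposition 3.4 — so that $c_i$ equals the limiting transverse integral $\lim_{\epsilon \to 0} \int_A R_\epsilon[B]$, where $A = T_1 \cap S$ and $B = T_2 \cap S$ are the germs cut out in the transverse slice $S$, meeting only at $x$ and of complementary dimension in $S$.

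Finally I would identify this transverse limit with the multiplicity. For two complex analytic germs of complementary dimension meeting properly at an isolated point, $\lim_{\epsilon \to 0} \int_A R_\epsilon[B]$ is the local topological intersection number of $A$ and $B$; this is a deformation invariant of the complex intersection and coincides with the local intersection multiplicity $m_i$ entering Fulton's cycle $T_1 \cdot T_2$ [5]. Being topological, it is manifestly independent of the bump function, of the coordinates, and of the entire de Rham datum $\mathcal{U}$. Hence $c_i = m_i$ for every $i$, giving $[T_1 \wedge T_2] = \sum_i m_i[W_i] = T_1 \cdot T_2$ independently of $\mathcal{U}$; the same computation may be organized on $X \times X$ by reduction to the diagonal (Definition 2.16, Proposition 2.17, Example 3.5), which is the precise sense in which the regularization limit realizes the deformation to the normal cone alluded to in the Remark following Proposition 2.10. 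Linearity in $T_1$ and $T_2$ is then immediate from the bilinearity of the intersection in Definition 2.9.

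The main obstacle is this last identification. In the transversal case of Proposition 3.4 the transverse fibre integral is literally the integral of a bump function and equals $1$; here, by contrast, one must prove that for a complex, non-transversal but proper intersection the transverse integral stabilizes as $\epsilon \to 0$ to the integer $m_i$ rather than to a data-dependent quantity — precisely the phenomenon that does go wrong in the real Examples 3.2--3.3. The essential input is the positivity and holomorphy of the complex structure, which present the local intersection as a finite branched cover whose degree equals $m_i$ and which the regularized integral recovers in the limit regardless of the smoothing data.
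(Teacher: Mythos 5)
Your opening step (supportivity, closedness, and the Lebesgue property force $[T_1\wedge T_2]=\sum_i c_i[W_i]$ with constant coefficients) is essentially the paper's own Step 1, and is sound. The genuine gap is at the point you yourself label ``the main obstacle'': the identification $c_i=m_i$ and the independence of $\mathcal U$ are asserted, not proved. You claim that for complex germs $A,B$ of complementary dimension meeting properly at a point, $\lim_{\epsilon\to 0}\int_A R_\epsilon[B]$ is a local topological intersection number, hence data-independent and equal to Fulton's multiplicity. But the regularized limit is not a priori any kind of topological invariant: the paper's Proposition 3.1 and Examples 3.2--3.3 show that for real chains meeting properly but non-transversally the limit genuinely depends on the bump functions (the parabola against its tangent line gives the zero current, not the point mass), and Theorem 3.9 shows the same dependence for excessive complex intersections. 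So ``properness plus holomorphy implies invariance'' is exactly the content of the theorem, and the ``finite branched cover'' heuristic does not supply an argument --- the smoothing $R_\epsilon[B]$ does not see the covering structure of $B$ unless one proves the limit is unchanged under a deformation that renders the intersection generically transversal. That is precisely what the paper does: it runs Fulton's deformation to the normal cone inside the blow-up $M$ of $X\times\mathbb P^1$ along $T_1\times\{\infty\}$, notes that $T_1$ is not moved and that $[T_1\wedge T_2^t]=\sum_j n_j[J_j]$ is constant in $t$, and then evaluates at $t=\infty$, where $T_2^\infty=\mathbf P(C_{T_1\cap T_2}T_2\oplus 1)$ is the flat pull-back $\sum_i m_i f^{-1}(J_i)$ of Fulton's cycle and meets $T_1$ (the zero section of $N_{T_1/M}$) generically transversally, so that the data-independent transversal formula (Proposition 3.4) gives $n_i=m_i$. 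This deformation argument is the missing idea; without it, or a substitute of comparable strength, your outline does not close.

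There is a second, structural defect in your localization step: you take a ``generic smooth point $x\in W_i$'' at which ``$W_i$, $T_1$ and $T_2$ are all smooth,'' but a component of a proper intersection can lie entirely inside the singular locus of one of the varieties. For instance $T_1=\{y^2z=x^3\}\subset\mathbf P^3$ is singular along the line $\ell=\{x=y=0\}$, and $T_2=\{y=0\}$ meets it properly with $\ell$ as a component (of multiplicity $3$); no point of $\ell$ is a smooth point of $T_1$, so your transverse-slice reduction never applies along that component. The paper is organized around exactly this difficulty: Case 1 is proved only for $T_1$ smooth (the normal-cone argument needs only the smoothness of $T_1$, since $T_2$ and the cone $C_{T_1\cap T_2}T_2$ may be arbitrarily singular), and the general case is reduced to Case 1 by replacing $(T_1,T_2)$ with the smooth diagonal $\Delta$ against $T_1\times T_2$ in $X\times X$, using the product de Rham data and the projection formula, i.e. $[T_1\wedge T_2]=(Pr)_\ast[\Delta\wedge(T_1\times T_2)]$ and $(Pr)_\ast(\Delta\cdot(T_1\times T_2))=T_1\cdot T_2$. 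Note that this diagonal trick alone would not rescue your slice argument, since $T_1\times T_2$ is still singular along the offending components; it is the normal-cone deformation, applied with the smooth factor as the ambient ``$T_1$,'' that absorbs all singularities of the other factor.
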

\bigskip

{\bf Remark} Theorem shows that $[T_1\wedge T_2]$ in this case is intrinsic. This is the principle for the real intersection theory:
every intrinsically defined intersection in the past can be interpreted as an intersection of currents 
that are independent of De Rham data.

\bigskip

\begin{proof}   
Let's fix the cycle $T_2$.  By example 11.4.2, [3], there is an algebraic cycle $E_1$ rationally equivalent to $T_1$ such that
$A$ meets $T_2$ transversely (at an open set of each irreducible support).  Without losing the generality,
 let's have a  simplified setting as follows.
Let $V\subset X\times \mathbf P^1$ be an irreducible subvariety, and $P_1: V\to X$, $P_2: V\to \mathbf P^1$ are the projections.
Let $T_2\subset X$ be an irreducible subvariety.   Assume the cycle of the scheme $P_2^{-1}(1)$ is $E_1$ and the cycle of the scheme
$P_2^{-1}(0)$ is $T_1$, where $0, 1$ are two points of $\mathbf P^1$. Let $I$ be a real curve in $\mathbf P^1$  connecting 
$0, 1$. Next we consider two objects: currents and algebraic cycles. 
Using the currents, according to Proposition 4.12 below in section 4, we have 
the formula 
\begin{equation}
[T_1\wedge T_2]-[E_1\wedge T_2]=d \biggl ( (P_1)_\ast ( V_I(T_2))\biggr)
\end{equation} 
where $ V_I(T_2)$ is the current
\begin{equation}
[V\wedge (T_2\otimes I)].
\end{equation}

Next we consider the algebraic cycles in  intersection theory where two rationally equivalent algebraic cycles
are homotopic. More precisely we have the equation in singular cycles
\begin{equation}
T_1\cdot T_2-E_1\cdot T_2=d \biggl ( (P_1)_\ast ( V_I'(T_2))\biggr),
\end{equation}
where $V_I'(T_2)$ is the singular cycle defined by the semi-algebraic set in the complex manifold $X\times\mathbf P^1$, 
$T_1\cdot T_2, E_1\cdot T_2$ are singular cycles obtained from the triangulation of the intersectional algebraic cycles,
and $d$ is the differential operator on the singular chains ( the boundary operator with a sign).   
We should note that  the current of integration over 
$V_I'(T_2)$ is the current $V_I(T_2)$. 
Next we convert the equation (3.40) to that in currents to have
\begin{equation}
[T_1\cdot T_2]-[E_1\cdot T_2]=d \biggl ( (P_1)_\ast ( V_I(T_2))\biggr). 
\end{equation}
Hence 
\begin{equation}
[T_1\wedge T_2]-[E_1\wedge T_2]=[T_1\cdot T_2]-[E_1\cdot T_2]
\end{equation}
(where the difference of two sides should be noticed).
By Proposition 3.1, since $E_1$ meets $T_2$ transversely,  \begin{equation}
[E_1\wedge T_2]=[E_1\cdot T_2].\end{equation}
Thus $$
[T_1\wedge T_2]=[T_1\cdot T_2].
$$
We complete the proof. 

\end{proof}

\bigskip

\bigskip

\begin{theorem}
Let $X$ be a smooth projective variety of dimension $n$ over $\mathbb C$. 
 Let $T_1, T_2$ be  subvarieties of $X$ of codimension $p, q$.  
The currents of integration over them are also denoted by $T_1, T_2$
respectively.  Assume $T_1\cap T_2$ is an excess intersection.  Then
\begin{equation}
[T_1\wedge T_2]
\end{equation}
in general depends on the De Rham data $\mathcal U$.

\end{theorem}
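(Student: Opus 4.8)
The plan is to prove dependence exactly as in the real case of Proposition 3.1: by exhibiting a single explicit excessive intersection whose intersection current detects the bump function. The cleanest instance is a self-intersection. I would take $X=\mathbf P^2$ over $\mathbb C$ and let $T_1=T_2=C$ be the projective closure of the affine conic $z_1=z_2^2$ in the chart $\mathbb C^2$ with coordinates $(z_1,z_2)$. Since $\dim(T_1\cap T_2)=\dim C=1$ while the expected dimension of a proper intersection is $\dim T_1+\dim T_2-\dim X=0$, the intersection is excessive, so $[T_1\wedge T_2]$ is a $0$-dimensional current supported on $C$ — the exact complex analogue of the parabola in Proposition 3.1. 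I equip $X$ with de Rham data carrying a single bump function $f$ supported in the unit ball of the chart, and take a test function $\phi$ with compact support near the origin.

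Next I would write $[C\wedge C](\phi)=\lim_{\epsilon\to 0}\int_{C}R_\epsilon C\wedge\phi$ using the local fibre-integral form of $R_\epsilon$ from (2.12), parametrizing each copy of $C$ by $z_2=w$, so that points of $C$ are $(w^2,w)$. Substituting $z_1=w^2$ and $z_1'=(w')^2$ into the smoothing kernel factors the difference as $z_1-z_1'=(w-w')(w+w')$, precisely mirroring (3.4). I would then make the change of variables $u=(w-w')/\epsilon$ (the fast variable) and $v=w+w'$ (the slow variable), now over $\mathbb C$, carefully tracking the complex volume forms and the Jacobian, the key point being that the factorization contributes a weight $(w-w')=\epsilon u$. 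As $\epsilon\to 0$ the slow variable freezes $\phi$ at the points $(v^2/4,v/2)$ of $C$, while the fast variable integrates $f$ against this curvature-generated factor, leaving a density $g$ on $C$ that is a nontrivial fibre integral of $f$.

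The conclusion is a formula of the shape $[C\wedge C](\phi)=\int_{v\in\mathbb C} g(v)\,\phi((v/2)^2,v/2)\,vol(v)$, a distribution supported on $C$ whose density $g$ depends on the bump function $f$. Since distinct admissible choices of $f$ produce distinct densities $g$, the current $[T_1\wedge T_2]$ genuinely depends on $\mathcal U$, which establishes the theorem. This stands in sharp contrast with the transversal and proper situations of Proposition 3.4 and Theorem 3.8, where the analogous fibre integral of $f$ always normalizes to $1$ (as in (3.13)) and hence washes out the dependence.

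The main obstacle I anticipate is the same one that makes the real computation of Proposition 3.1 nontrivial: carrying out the complex change of variables so that the $\epsilon\to 0$ limit cleanly isolates an $f$-dependent factor, and verifying that this factor cannot be absorbed by the normalization $\int f=1$. Concretely, one must confirm that the weight $(w-w')$ coming from $z_1-z_1'=(w-w')(w+w')$ survives as a genuine $u$-dependence in the limit — this is exactly where the non-transversality of the self-intersection enters and distinguishes the curved conic from a linear coordinate plane — and check that the interchange of the limit with the fibre integrals is legitimate, which follows from the convergence guaranteed by Theorem 2.8 together with the compact support of $\phi$. The passage from the affine chart back to $\mathbf P^2$ is routine given that $\phi$ has compact support in the chart.
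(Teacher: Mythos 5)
Your proposal is correct in substance, but it proves the theorem by a genuinely different mechanism than the paper does. The paper's own example is the self-intersection of the \emph{line} $T_1=T_2=\{z_2=0\}$ in $\mathbf P^2$: there the locally smoothed current $r_\epsilon^B(T_2)$ is proportional to $dx_2\wedge dy_2$, which restricts to zero on $T_1$, so $[T_1\wedge T_2]$ vanishes identically on the affine chart; since by Property 2.12 (cohomologicity) its class is the nonzero point class, the current is forced to be a nonzero $0$-dimensional current sitting at the infinity point of the covering, and dependence on $\mathcal U$ follows because the covering --- i.e.\ which point of $T_1$ plays the role of infinity --- is itself part of the de Rham data. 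Note that on this flat example your mechanism would detect nothing: the density in the chart is identically zero for every bump function, so one must vary the covering, which is exactly what the paper does. Your curved conic instead detects dependence through the bump function with the chart held fixed, by transplanting the computation of Proposition 3.1; this is a legitimate route and in some ways more informative, since it exhibits the limiting current explicitly and shows the dependence lives in the density along $C$, with only the total mass pinned down by cohomologicity. One computational correction to your outline: in the complex setting the factor surviving the rescaling is not $(w-w')=\epsilon u$ but $|u|^2$. Indeed $dz_1-dz_1'=\epsilon\, d(uv)$ and $dz_2-dz_2'=\epsilon\, du$, so the pulled-back kernel form is $\epsilon^4\, d(uv)\wedge\overline{d(uv)}\wedge du\wedge d\bar u=\epsilon^4|u|^2\, dv\wedge d\bar v\wedge du\wedge d\bar u$, which exactly cancels the $\epsilon^{-4}$ in the kernel and yields $g(v)=c\int_{\mathbb C}|u|^2 f(uv,u)\,dA(u)$, where $dA$ is the area measure. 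With this form the final verification you flagged becomes easy: $g(v)$ is a weighted integral of the restriction of $f$ to the line $\{z_1=v z_2\}$, so a radial $f$ and an admissible symmetric $f$ concentrated near $\{z_2=0\}$ (both with $\int f=1$) give visibly different densities, while the normalization constrains only the total mass, not its distribution along the conic.
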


\bigskip

\begin{proof}  
Let's give an example where $[T_1\wedge T_2]$ is dependent of De Rham data.
Let $\mathbf P^2$ be a projective space over $\mathbb C$  with affine coordinates $(z_1, z_2)$. 
  Let $T_1$ be the  hyperplane $z_2=0$, and $T_2=T_1$.  First it is not zero because its reduction to cohomology group is non-zero.
Choose two open sets as De Rham's covering:  $U_1$, the finite affine plane, and a small neighborhood $U_2$ of the infinity $\mathbf P^1\subset \mathbf P^2$. 
Choose real Euclidean coordinates $x_1, y_1, x_2, y_2$ for $U_1$ such that $$z_1=x_1+iy_2, z_2=x_2+iy_2.$$
Use these open covering and Euclidean coordinates to have a De Rham data for $\mathbf P^2$ with a 
convolution  function $h(x_1, x_2, y_1, y_2)$ of the unit ball $B$ in $U_1$.
Then we see in $U_1$, 
\begin{equation}
R_\epsilon ^1(T_2)=-{1\over \epsilon^4}\iint_{(x_1', y_1')\in \mathbb R^2}
h({x_1-x_1'\over \epsilon}, {x_2\over \epsilon}, {y_1-y_1'\over \epsilon}, {y_2\over \epsilon})
dx_1'\wedge dy_1'\wedge dx_2\wedge dy_2,\end{equation}
where $x_i', y_i'$ are the Euclidean coordinates for the second factor in the smoothing operator. 
The composing with another local smoothing operator from $U_2$ will not change the 
smooth current  $R_\epsilon ^1(T_2)$ in $B$. 
Thus for a test form $\phi$ supported in $B$, the integral 
\begin{equation}
\int_{T_1}R_\epsilon^B (T_2)\wedge \phi=\iint_{x_2=y_2=0} (\cdots) dx_2\wedge dy_2=0.
\end{equation}
This shows with this type of De Rham data, 
\begin{equation}
[T_1\wedge T_2]
\end{equation}
is zero on $U\cap T_1$.  Hence $[T_1\cap T_2]$ is a $0$-dimensional current  supported 
at the infinity point of $T_1$.  Since the $\infty=\mathbf P^1$ is  arbitrary,   
 $[T_1\wedge T_2]$ is supported on an arbitrary  set determined by the De Rham data.

\end{proof}

\begin{ex}\quad \par
The  intersection of currents of integration over algebraic cycles  always exists because algebraic cycles are Lebesgue. But its
intersection depends on De Rham data. Theorem 3.5 asserts that in case of a proper intersection, 
it is actually independent of
De Rham data.  
But for excess intersection, the situation still goes back to the dependence of De Rham data, 
and the interpretation is   different  from Fulton's.  
\bigskip

\begin{table}[ht]
\caption{\large {Excess intersection in complex case}} \medskip
\begin{tabular}{|c|| c |c|c|}

\hline
 Intersection &  Cycle &  Cycle class   & Support 
\\
\hline
&&&\\
algebraic\ $T_1\cdot T_2$ & not well-defined & well-defined in the Chow ring, & $T_1\cap T_2$\\
 &  & and cohomology ring & \\  
\hline  &&&\\

current \ $[T_1\wedge T_2]$ &  well-defined,   & not well-defined in the Chow ring,  & $T_1\cap T_2$ \\
 & but $\mathcal U$-dependent  & well-defined in cohomology ring &\\
 \hline
\end{tabular}
\label{table:nonlin} 
\end{table}

 \end{ex}

\bigskip

$$$$

\bigskip

\section{Tools for application}

\subsection{ Correspondence of a current }\par

\bigskip

\begin{lemma} Let $\mathcal X, \mathcal Y$ be two compact  manifolds, and 
$P_{\mathcal X}$ be the projection 
$$\mathcal X\times \mathcal Y\to \mathcal X.$$
Then the image of the projection
$$\begin{array}{ccc}
(P_{\mathcal X})_\ast: \mathcal C(\mathcal X\times \mathcal Y) &\rightarrow & \mathscr D'(\mathcal X)
\end{array}$$
lies in $\mathcal C(\mathcal X)$.

\end{lemma}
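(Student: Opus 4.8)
The plan is to verify, point by point, that every $q \in \mathcal{X}$ is a point of Lebesgue type for the pushforward $S := (P_{\mathcal X})_\ast T$; by linearity we may assume $T \in \mathcal{C}(\mathcal X \times \mathcal Y)$ is homogeneous of degree $p$. Write $m = \dim \mathcal X$ and $n = \dim \mathcal Y$. Since $\mathcal Y$ is compact, $P_{\mathcal X}$ is proper and $S$ is a well-defined current of degree $p-n$ (fibre integration lowers degree by $n = \dim \mathcal Y$); if $p < n$ then $S = 0$ is trivially Lebesgue, so I assume $p \ge n$. Because conditions (a), (b) of Definition 2.6 are local on $\mathcal X$ and are phrased through coordinate projections, it suffices to analyse $S$ in a coordinate chart $U$ about $q$ after cutting off by some $\xi$ supported in a small ball $B \subset U$.

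The key observation is a dimension match that turns ``integrating out the fibre'' into an ordinary coordinate projection. Fix a coordinate projection $\pi : U \to V \simeq \mathbb R^{m-p+n}$ onto an $(m-p+n)$-dimensional coordinate plane of $\mathcal X$, as required for the degree-$(p-n)$ current $S$. Extend the coordinates on $U$ to product coordinates $(x,y)$ on $U \times W$ (with $W$ a chart of $\mathcal Y$), so that the first $m$ coordinates are those of $\mathcal X$. Then the composite $\tilde\pi := \pi \circ P_{\mathcal X}$ forgets the $p-n$ coordinates forgotten by $\pi$ together with all $n$ fibre coordinates $y$, i.e. it is precisely the coordinate projection of $\mathcal X \times \mathcal Y$ (of dimension $m+n$) onto $V$, now viewed as an $(m+n-p)$-dimensional plane. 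Since $m-p+n = (m+n)-p$, this is exactly an admissible projection for the degree-$p$ Lebesgue current $T$. By functoriality of pushforward, $\pi_\ast(\xi S) = \tilde\pi_\ast\big((\xi\circ P_{\mathcal X})\,T\big)$, reducing the Lebesgue property of $S$ under $\pi$ to that of $T$ under $\tilde\pi$.

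The one genuine subtlety is the shape of the cutoff: $\xi\circ P_{\mathcal X}$ is supported on the tube $B \times \mathcal Y$ rather than on a single coordinate ball, whereas Definition 2.6(a) is stated for cutoffs supported in one chart. This is exactly where compactness of $\mathcal Y$ enters. I would choose a finite partition of unity $\{\rho_k\}$ on the compact set $\overline B \times \mathcal Y$ subordinate to finitely many product charts, and write $(\xi\circ P_{\mathcal X})\,T = \sum_k \xi_k T$ with $\xi_k := (\xi\circ P_{\mathcal X})\rho_k$ each compactly supported in a product chart. Each $\xi_k$ is then an admissible cutoff for $T$, so condition (a) gives that $\tilde\pi_\ast(\xi_k T)$ is absolutely continuous with bounded, Riemann integrable Lebesgue distribution $\mathcal L_k$, and condition (b) produces polar functions $\psi^{(k)}_a$.

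Summing over the finite index set completes the argument: $\pi_\ast(S_B) = \sum_k \tilde\pi_\ast(\xi_k T)$ is absolutely continuous with Lebesgue distribution $\sum_k \mathcal L_k$, a finite sum of bounded Riemann integrable functions and hence again bounded and Riemann integrable, verifying (a); and its polar function is $\sum_k \psi^{(k)}_a$, the defining limit in (b) commuting with the finite sum, verifying (b). Thus $q$ is of Lebesgue type for $S$, and as $q$ was arbitrary, $S \in \mathcal C(\mathcal X)$. I expect the main obstacle to be precisely this last bookkeeping step — reconciling the tube-shaped cutoff $\xi \circ P_{\mathcal X}$ with the ball-shaped cutoffs of Definition 2.6 and checking that boundedness, Riemann integrability and the polar-function limit all survive the finite fibrewise summation; the dimension match itself is the conceptual core and is essentially formal.
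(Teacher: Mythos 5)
Your proposal is correct, and it is worth recording how it differs from the paper's own argument: the paper's entire proof is two sentences — it identifies $(P_{\mathcal X})_\ast T$ with the ``projection'' of $T$ onto the submanifold $\{y\}\times\mathcal X$, asserts that this projection is again Lebesgue, and concludes via the diffeomorphism $\{y\}\times\mathcal X\simeq\mathcal X$. The asserted step is, in effect, the lemma itself; conditions (a) and (b) of Definition 2.6 are never checked. Your argument supplies exactly the missing content: the degree bookkeeping (pushforward drops the degree by $\dim\mathcal Y$, so an admissible $(m-p+n)$-dimensional coordinate plane for $S=(P_{\mathcal X})_\ast T$ corresponds, via $\tilde\pi=\pi\circ P_{\mathcal X}$, to an admissible $(m+n-p)$-dimensional plane for $T$), the projection formula $\pi_\ast(\xi S)=\tilde\pi_\ast\bigl((\xi\circ P_{\mathcal X})T\bigr)$, the finite partition of unity over the compact fibre to reconcile the tube-shaped cutoff with the chart-local cutoffs of Definition 2.6, and the stability of (a), (b) under finite sums. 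Your route also makes explicit where compactness of $\mathcal Y$ enters (properness of $P_{\mathcal X}$ and finiteness of the cover), which the paper never mentions. Two small refinements to tighten your write-up: first, the partition of unity should be subordinate not merely to finitely many product charts but to the ``good'' neighborhoods that Definition 2.6 guarantees around each point of $\overline B\times\mathcal Y$ (the definition only promises the measure-theoretic conditions for cutoffs supported in such a neighborhood, which may be smaller than the chart); compactness gives a finite subcover of these just as well. Second, when choosing the product charts $U\times W_k$, keep the same chart $U$ and the same coordinates on the $\mathcal X$-factor throughout, so that all restrictions $\tilde\pi|_{U\times W_k}$ land on the same plane $V$ and the densities $\mathcal L_k$ can legitimately be added; your phrase ``the first $m$ coordinates are those of $\mathcal X$'' does this implicitly, but it deserves to be said.
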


\bigskip

\begin{proof}
Notice there is a coordinates  chart  of $\mathcal X\times \mathcal Y$ satisfying that  the coordinates planes of $\mathcal X$ are also 
the coordinates  planes for $\mathcal X\times \mathcal Y$. Thus the two conditions of Lebesgue currents for $\mathcal X$
are implied by that for $\mathcal X\times \mathcal Y$.

\end{proof}

\bigskip

\begin{definition} Let $\mathcal X, \mathcal Y$ be two compact  manifolds. 

Let \begin{equation}
F \in \mathcal C( \mathcal X \times  \mathcal Y)
\end{equation}
be a Lebesgue current. 
Let $\mathcal U$ be a product De Rham data on $\mathcal X \times  \mathcal Y$.  
Let $P_\mathcal X, P_\mathcal Y$ be the projections 
$$\mathcal X\times \mathcal Y\to \mathcal X, \quad  \mathcal X\times \mathcal Y\to \mathcal Y.$$

Define pull-back of currents
$$ F^\ast(T)$$ by 
\begin{equation}\begin{array}{ccc}
F^\ast: \mathcal C(\mathcal Y) &\rightarrow & \mathcal C(\mathcal X)\\
T &\rightarrow & (P_\mathcal X)_\ast [F\wedge ([\mathcal X\otimes T)].
\end{array}\end{equation}

Define the push-forward $F_\ast (T)$ of currents by

\begin{equation}\begin{array}{ccc}
F_\ast:  \mathcal C(\mathcal X) &\rightarrow & \mathcal C(\mathcal Y)\\
T  &\rightarrow &  (P_\mathcal Y)_\ast [ F\wedge (T\otimes [\mathcal Y])]
.\end{array}\end{equation}

\end{definition}

\bigskip

\bigskip

\begin{proposition} 
Let $X, Y, Z$ be three smooth projective varieties over $\mathbb C$. Let 
$F_{XY}, F_{YZ}$ be algebraic cycles on $X\times Y$ and $Y\times Z$ respectively, and  represent  finite  correspondences ([6]). Then
\begin{equation}
(F_{YZ})_\ast\circ (F_{XY})_\ast= (F_{YZ}\circ F_{XY})_\ast,
\end{equation}
where $F_{YZ}\circ F_{XY}$ denotes the multiplication of algebraic correspondences.
Similarly  \begin{equation}
(F_{XY})^\ast\circ (F_{YZ})^\ast= (F_{XY}^t\circ F_{YZ}^t)^\ast,
\end{equation}
where $F_{XY}^t,  F_{YZ}^t$ are the transposes of the correspondences. 
\end{proposition} 
\bigskip

\begin{proof}
Because they are finite correspondences, hence
\begin{equation}
(F_{XY}\times Z)\cdot (X\times F_{YZ})
\end{equation}
is a well-defined algebraic cycle $\Gamma$ in $X\times Y\times Z$.
Let $\sigma\in \mathcal C(X)$.
By Theorem 3.5, evaluations of both sides of (4.4) is equal to
\begin{equation}
(P_Z)_\ast [ (\sigma\otimes Y\otimes Z)\wedge \Gamma],
\end{equation}
where $P_Z: X\times Y\times Z\to Z$ is the projection. So (4.4) is proved. 
The  (4.5) is the transpose of (4.4).

\end{proof}
\bigskip

\begin{proposition} Let $ X,  Y$ be compact complex manifolds. 
The pull-back and push-forward of currents extend  
Gillet and Soul\'e's push-forward of currents and smooth pull-back  of currents.
\end{proposition}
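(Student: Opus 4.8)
The plan is to realize each Gillet--Soul\'e operation as a current correspondence through a graph. Let $f:\mathcal X\to\mathcal Y$ be a proper holomorphic map and let $g=(\mathrm{id},f):\mathcal X\to\mathcal X\times\mathcal Y$, $x\mapsto(x,f(x))$, be the graph embedding; set $F=g_\ast[\mathcal X]=[\Gamma_f]$, the current of integration over the graph $\Gamma_f=g(\mathcal X)$. Since $\Gamma_f$ is a compact complex submanifold, $F$ is a closed Lebesgue current, so Definition 4.2 applies to it. Recall that Gillet--Soul\'e's push-forward on currents is the transpose of the pull-back of test forms, $(f_\ast T)(\omega)=T(f^\ast\omega)$, and that, when $f$ is smooth, their pull-back is the transpose of fibre integration, $(f^\ast S)(\omega)=S(f_\ast\omega)$ where $f_\ast\omega$ denotes integration over the fibre. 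I will show that $F_\ast=f_\ast$ and, for smooth $f$, that $F^\ast=f^\ast$; this exhibits the two Gillet--Soul\'e operations as instances of the correspondence of currents, which is the assertion.

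First I treat the push-forward. By supportivity (Property 2.12) the current $[F\wedge(T\times[\mathcal Y])]$ is carried by $\Gamma_f\cap(\mathrm{supp}(T)\times\mathcal Y)=g(\mathrm{supp}(T))$, hence by Lemma 2.4 and Definition 2.9(2) it is the push-forward under $g$ of a current on $\mathcal X$. The key identity to establish is
\begin{equation}
[F\wedge(T\times[\mathcal Y])]=g_\ast T .
\end{equation}
This is the fibre-transversal analogue of Example 3.5 and Example 3.6: the factor $[\mathcal Y]$ fills the entire second factor while $\Gamma_f$ is transversal to every fibre $\{x\}\times\mathcal Y$, so the bump-function integrals in the regularization collapse to $1$ in the $\mathcal Y$-directions exactly as in the proof of Proposition 3.4, leaving $T$ along $\Gamma_f$. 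Granting this, functoriality of the current push-forward gives $F_\ast T=(P_{\mathcal Y})_\ast g_\ast T=(P_{\mathcal Y}\circ g)_\ast T=f_\ast T$, and since $(P_{\mathcal Y})_\ast$ and $g_\ast$ are the ordinary (transpose-of-pull-back) operations on currents, this coincides with the Gillet--Soul\'e push-forward.

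Next I treat the smooth pull-back, assuming $f$ is a submersion. The argument is parallel: $[F\wedge([\mathcal X]\times S)]$ is supported on $\Gamma_f\cap(\mathcal X\times\mathrm{supp}(S))=g(f^{-1}(\mathrm{supp}(S)))$, and because $f$ is smooth the graph meets $\mathcal X\times\mathrm{supp}(S)$ transversally in the fibre direction, so Proposition 3.4 again makes the intersection $\mathcal U$-independent and geometric. Unwinding the regularization limit through the fibre, one obtains $(P_{\mathcal X})_\ast[F\wedge([\mathcal X]\times S)]=f^\ast S$, where the transversality guarantees that the limit reproduces precisely the fibre integral dual to $f_\ast$ on forms; this is Gillet--Soul\'e's smooth pull-back. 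The projection formula (Proposition 2.17) can be used to organize this computation by moving the full factor across $(P_{\mathcal X})_\ast$.

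The main obstacle is the identity $[F\wedge(T\times[\mathcal Y])]=g_\ast T$ and its pull-back counterpart, since Proposition 3.4 is stated only for real cells meeting transversally. I therefore expect the technical heart to be upgrading that transversal computation to general Lebesgue currents $T,S$ and to the graph of a holomorphic (resp.\ smooth) map: one must check that the intersection is genuinely transversal \emph{in the fibre directions}, so that the regularization is $\mathcal U$-independent, and that the resulting fibre integrals of the bump functions reduce to $1$, yielding the geometric answer rather than a $\mathcal U$-dependent one. For the pull-back the additional subtlety is that smoothness of $f$ is essential: without it the fibre integral defining $f^\ast$ does not exist, matching the fact that the correspondence $F^\ast$ need not reproduce a pull-back for non-smooth $f$. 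Once these transversality and collapse statements are in place, the identification with the Gillet--Soul\'e operations is immediate from their definition by duality.
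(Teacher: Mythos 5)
Your overall architecture matches the paper's: realize $f_\ast$ and $f^\ast$ through the graph correspondence $F=[\Gamma_f]$ and show $F_\ast T=f_\ast T$ and $F^\ast S=f^\ast S$, identified with the Gillet--Soul\'e operations by duality. But there is a genuine gap at the step you yourself flag as the ``technical heart'': the identity $[F\wedge(T\times[\mathcal Y])]=g_\ast T$ for a \emph{general} Lebesgue current $T$. You justify it by appeal to Proposition 3.4, which is stated and proved only for cells meeting transversally; a Lebesgue current is not a cell, and the proposition must hold for all Lebesgue currents, since Gillet--Soul\'e's push-forward is defined on all currents, not just integration currents. No upgrade of Proposition 3.4 is supplied, and transversality is in fact the wrong mechanism here: what makes the $\mathcal Y$-directions collapse is not the position of $T\times[\mathcal Y]$ relative to $\Gamma_f$, but the fact that the second factor is the \emph{full} fundamental current $[\mathcal Y]$, so the bump-function integral in those directions equals $1$ exactly, at the level of the regularization itself and before any limit is taken.

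That is precisely the content of part (1) of the projection formula, Proposition 2.17: $R_\epsilon^{\mathcal X\times\mathcal Y}(T\times\mathcal Y)=(P_{\mathcal X})^\ast\bigl(R_\epsilon^{\mathcal X}(T)\bigr)$, an identity valid for every $\epsilon$ and every Lebesgue $T$, with no transversality hypothesis. The paper's proof consists of inserting this identity into the defining limit: for the push-forward, restricting to the graph (diffeomorphic to $\mathcal X$ via $P_{\mathcal X}|_F$) turns the limit into $\lim_{\epsilon\to 0}\int_{\mathcal X}R_\epsilon^{\mathcal X}(T)\wedge f^\ast\phi=T(f^\ast\phi)$, which converges by Definition 2.1(6); for the pull-back, with $f$ a submersion so that $P_{\mathcal Y}|_F\colon F\to\mathcal Y$ is one too, the same substitution followed by fibre integration gives $\lim_{\epsilon\to 0}\int_{\mathcal Y}R_\epsilon^{\mathcal Y}(T)\wedge f_\ast\phi=T(f_\ast\phi)$. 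You cite Proposition 2.17 only as a bookkeeping device (``to organize this computation''), but it is the tool that closes exactly the gap you left open; once it is used, the transversality discussion and the appeal to Proposition 3.4 become unnecessary, and the argument works uniformly for all Lebesgue currents.
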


\begin{proof}
In [4], Gillet and Soul\'e defined operations on the currents on compact complex manifolds. They include
push-forward for proper maps and pull-back for smooth maps. We verify that 
these operations coincide with ours. \bigskip
 
Let \begin{equation}\begin{array}{ccc}
f:  X &\rightarrow &  Y
\end{array}\end{equation}
be a regular map. Let $F$ be its graph.
Let $T$ be a Lebesgue current on $ X$.
Let  $\phi$ be a $C^\infty$ form on  $ Y$. We use product De Rham data on $ X\times  Y$ 
and projection  De Rham 
data on $ X $ and $  Y$. 
 Then 
\begin{align}\begin{split}
 &\int_{F_\ast(T)}\phi \\
& =
\lim_{\epsilon\to 0}\int_{F} R_\epsilon^{ X\times  Y}(T\times  Y)\wedge (P_ Y)^\ast (\phi)\\
 & (\text {by  Proposition 2.8, the  projection  formula })\\
 &=\lim_{\epsilon\to 0}\int_{F} ( P_{ X})^\ast R_\epsilon^{ X} (T)\wedge  (P_ Y)^\ast (\phi)\\
& =
\lim_{\epsilon\to 0}\int_{ X} R_\epsilon^{ X}(T)\wedge f^\ast(\phi)\\
 &=\int_{T} f^\ast(\phi) .
\end{split}\end{align}
This shows $$F_\ast (T)= f_\ast (T)$$
where $f_\ast$ is defined as the dual of the pullback on forms in 1.1.4, ([4]). 

\bigskip
Now 
let \begin{equation}\begin{array}{ccc}
f:  X &\rightarrow &  Y
\end{array}\end{equation}
be a smooth map.  Let  
$$F\subset  X \times   Y$$ be its graph.
Let $\phi$ be a test form on $ X$. 
\begin{align}\begin{split} &
\int_{F^\ast(T)}\phi=\int_{(P_ X)_\ast [F\wedge ( X\times T)]} \phi
\\&=
\displaystyle{\lim_{\epsilon\to 0}} \int_{F} R_\epsilon^{ X\times  Y}( X\times T)
\wedge (P_X)^\ast( \phi)\\
&   (\text {by  Proposition 2.8, the  projection  formula })\\
&=\displaystyle{\lim_{\epsilon\to 0}} \int_{F} (P_ Y)^\ast (R_\epsilon^{Y}(T))
\wedge (P_ X)^\ast( \phi)
.\end{split}\end{align}
Notice  
\begin{equation}\begin{array}{ccc}
P_{ Y}: F &\rightarrow &  Y
\end{array}\end{equation}
is isomorphic to $f$ which is also smooth.
Then we apply the fibre integral to have

\begin{equation}
\displaystyle{\lim_{\epsilon\to 0}} \int_{F} (P_ Y)^\ast (R_\epsilon^{ Y}(T))
\wedge (P_ X)^\ast( \phi)=\int_{T} f_\ast(\phi).
\end{equation}

Thus \begin{equation}
F^\ast(T)=f^\ast(T).
\end{equation} We
complete the proof. 
\end{proof}

\bigskip

\begin{proposition}\par

Let $\mathcal  X,  \mathcal Y$ be two compact manifolds. 

Let \begin{equation}
F \in \mathcal C(\mathcal   X \times  \mathcal  Y)
\end{equation}
be a homogeneous closed, Lebesgue current. 

(a)  
Let $T$ be a Lebesgue current of $\mathcal  X$ or $\mathcal Y$.  
 Then $supp (F_\ast (T))$ is contained in the set 
$$ P_{\mathcal Y} \biggl(supp (F)\cap (supp(T)\times \mathcal Y)\biggr);$$ $supp (F^\ast(T))$ is contained in
the set  $$ P_\mathcal X  \biggl(supp(F)\cap (\mathcal X\times supp(T) \biggr).$$
 \par

(b) If $T_1, T_2$ are Lebesgue and closed (rspt.  homologous to zero) in $\mathcal X$ and $\mathcal Y$ respectively, then
$F_\ast (T_1), F_\ast (T_2)$ are also closed (rspt. homologous to zero).

\end{proposition}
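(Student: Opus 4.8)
The plan is to reduce both parts to facts already in hand: the supportivity and topologicity of the intersection (Property 2.12, parts (1) and (2)), the cohomologicity (Property 2.12, part (4)), together with the two elementary properties of a proper projection $(P_{\mathcal Y})_\ast$ on currents, namely that it contracts supports and commutes with the boundary operator $b$. Throughout I would argue for $F_\ast$ explicitly; the corresponding statements for $F^\ast$ follow verbatim after interchanging the roles of $\mathcal X$ and $\mathcal Y$ and replacing $T\times\mathcal Y$ by $\mathcal X\times T$, since the two constructions are symmetric. (In the second displayed claim of (b), $F_\ast(T_2)$ must be read as $F^\ast(T_2)$, as $F_\ast$ is only defined on $\mathcal C(\mathcal X)$.)

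For part (a), I would write $F_\ast(T)=(P_{\mathcal Y})_\ast S$ with $S=[F\wedge(T\times\mathcal Y)]$. Since $supp(T\times\mathcal Y)=supp(T)\times\mathcal Y$, Property 2.12(1) gives $supp(S)\subseteq supp(F)\cap(supp(T)\times\mathcal Y)$. Next I would establish the support-contraction of $(P_{\mathcal Y})_\ast$ directly from $(P_{\mathcal Y})_\ast S(\phi)=S(P_{\mathcal Y}^\ast\phi)$: if $\phi$ is a test form supported in an open $V\subseteq\mathcal Y$ disjoint from $P_{\mathcal Y}(supp(S))$, then $P_{\mathcal Y}^\ast\phi$ is supported in $\mathcal X\times V$, which is disjoint from $supp(S)$, so the pairing vanishes. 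Hence $supp(F_\ast(T))\subseteq\overline{P_{\mathcal Y}(supp(S))}$, and because $\mathcal X$ is compact the projection $P_{\mathcal Y}$ is a closed map, so the closure is superfluous and I obtain exactly the asserted inclusion.

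For part (b), closedness is formal: $T$ closed forces $T\times\mathcal Y$ closed, since the fundamental current $[\mathcal Y]$ of the compact manifold $\mathcal Y$ is closed and $b(T\times[\mathcal Y])=bT\times[\mathcal Y]\pm T\times b[\mathcal Y]=0$; then topologicity (Property 2.12(2)) makes $S=[F\wedge(T\times\mathcal Y)]$ closed, and $b\,F_\ast(T)=(P_{\mathcal Y})_\ast(bS)=0$ because push-forward commutes with $b$. For the homologous-to-zero case I would pass to classes $\langle\cdot\rangle$ as in Property 2.12(4): a current homologous to zero is in particular closed, so $T\times\mathcal Y$ is closed and, by the Künneth formula, $\langle T\times\mathcal Y\rangle=\langle T\rangle\times[\mathcal Y]=0$; cohomologicity then yields $\langle S\rangle=\langle F\rangle\cup\langle T\times\mathcal Y\rangle=0$, and since $(P_{\mathcal Y})_\ast$ induces on homology the topological push-forward along the continuous map $P_{\mathcal Y}$, which is linear and annihilates the zero class, $\langle F_\ast(T)\rangle=(P_{\mathcal Y})_\ast\langle S\rangle=0$.

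The genuinely delicate step is the homologous-to-zero half of (b), as it is the only place requiring me to (i) invoke the Künneth decomposition of the class of a product current and (ii) identify the current-level push-forward $(P_{\mathcal Y})_\ast$ with the induced map on homology, so that vanishing of the homology class is preserved. By contrast, the support inclusion of (a) and the closedness of (b) are purely formal consequences of supportivity, topologicity, and the boundary-commutation and support-contraction of a proper projection, all of which ultimately rest only on the compactness of $\mathcal X$ and $\mathcal Y$.
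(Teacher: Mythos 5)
Your proposal is correct and follows essentially the same route as the paper: part (a) combines supportivity (Property 2.12(1)) with the support-contraction of the projection push-forward, and part (b) uses topologicity plus commutation of $(P_{\mathcal Y})_\ast$ with $b$ for closedness and cohomologicity for the homologous-to-zero case. Your write-up is in fact slightly more careful than the paper's at two points the paper glosses over --- using compactness to see $P_{\mathcal Y}$ is a closed map, and making the K\"unneth step explicit --- and your reading of $F_\ast(T_2)$ as $F^\ast(T_2)$ matches what the paper's own proof actually treats.
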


\bigskip

\begin{proof}
(a) Let $S$ be a Lebesgue current on $\mathcal X\times \mathcal Y$.
Let $a\notin P_\mathcal Y(supp(S))$. Then there is a neighborhood $B_a\subset \mathcal Y$ of a, such that
$$(\mathcal X\times B_a)\cap supp(S)=\varnothing.$$
Then for any $\phi\in \mathscr D(\mathcal Y)$ supported in $B_a$, 
\begin{equation}\int_S (P_\mathcal Y)^\ast(\phi)=0.\end{equation}
Then (4.16) says $$a\notin supp( (P_\mathcal Y)_\ast (S)).$$
Hence 
So \begin{equation}
supp( (P_\mathcal Y)_\ast (S))\subset P_\mathcal Y (supp (S)).
\end{equation}
Similarly 
 \begin{equation}
supp( (P_\mathcal X)_\ast (S))\subset P_\mathcal X (supp (S)).
\end{equation}
\bigskip

Now we consider our case.  Applying the assertion (4.17) for $$S=F\wedge (T\otimes \mathcal Y),$$
 together with part (1), property 2.6,

\begin{equation}\begin{array}{c}
supp ((P_\mathcal Y)_\ast[ F\wedge (T\times \mathcal Y)])\\
\cap\\
supp\biggl(
P_\mathcal Y(supp([ F\wedge (T\times \mathcal Y)]) )\biggr)\\
\cap\\
supp \biggl(P_\mathcal Y(supp( F)\cap (supp(T)\times \mathcal Y))\biggr).
\end{array}\end{equation}

The proof of 
\begin{equation}
supp(F^\ast(T))\subset  P_\mathcal X  \biggl(supp(F)\cap (\mathcal X\times supp(T) \biggr).
\end{equation} is similar.
\par

(b) By property 2.6, the currents $$[F\wedge (T_1\times \mathcal Y)], \ [F\wedge (\mathcal X\times T_2)]$$ are closed.
Therefore $F^\ast T_2, F_\ast T_1$ are closed.  If they are homologous to zero, 
then by the property 2.6, $$[F\wedge (T_1\times \mathcal Y)], \ [F\wedge (\mathcal X\times T_2)]$$
are homologous to zero in $\mathcal X, \mathcal Y$. Thus $F^\ast T_2, F_\ast T_1$ are homologous to zero.

We complete the proof

\end{proof}

\bigskip

\bigskip

\begin{ex}  Let $X, Y$ be two smooth projective varieties over $\mathbb C$, 
$$f: X\dasharrow Y$$ be a rational map. Then there is graph
 \begin{equation}
F\subset X\times Y.
\end{equation}

 Once $X\times Y$ is equipped with De Rham data (which does not have any requirements for $X, Y$), 
there are homomorphisms $F_\ast, F^\ast$ 
\begin{equation}\begin{array}{ccc}
F_\ast: \mathcal C(X) &\rightarrow & \mathcal C(Y)\\

 F^\ast: \mathcal C(Y) &\rightarrow & \mathcal C(X)
.\end{array}\end{equation}
When $\mathcal C(X), \mathcal C(Y)$ are reduced to cohomology, 
$F_\ast, F^\ast$ are reduced to the usual cohomological correspondences.

\end{ex}

\bigskip

\subsection{ Functoriality }

\bigskip

The real intersection theory is not functorial in the usual category of algebraic varieties.   However if we attach 
De Rham data, then it has a  functoriality. 
\bigskip

\begin{prodef}
Let $Cord(\mathbb C)$ be the category whose objects 
are the pairs of a smooth projective variety over $\mathbb C$,  and a De Rham data on it, denoted by $(X, \mathcal U)$. The 
morphisms are finite  correspondences of $X\times Y$.\end{prodef}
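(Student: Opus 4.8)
The content of this Proposition and Definition is the assertion that the stated data actually form a category: that composition of finite correspondences is well-defined, associative, and admits identities. I would organize the argument around the composition law. Given finite correspondences $F\in\mathcal C(X\times Y)$ and $G\in\mathcal C(Y\times Z)$, I would work on the triple product $X\times Y\times Z$ equipped with the product de Rham data assembled from $\mathcal U_X,\mathcal U_Y,\mathcal U_Z$ as in Definition 2.16, and define
\[
G\circ F=(P_{XZ})_\ast\bigl[(F\times Z)\wedge(X\times G)\bigr],
\]
where $F\times Z$ and $X\times G$ denote the geometric pullbacks of $F$ and $G$ to the triple product (these are the product currents appearing in the projection formula, Proposition 2.18), $\wedge$ is the intersection of Lebesgue currents from Definition 2.9, and $P_{XZ}$ is the projection to $X\times Z$.

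First I would check that $G\circ F$ is again a finite correspondence in $\mathcal C(X\times Z)$. The currents $F\times Z$ and $X\times G$ are Lebesgue (pullback by a smooth projection preserves the Lebesgue condition, as in Proposition 2.18(1)), and since $X\times Y\times Z$ is compact one of them has compact support; hence their intersection exists by Theorem 2.8 and is Lebesgue by Proposition 2.10. Pushing forward along $P_{XZ}$ preserves the Lebesgue property by Lemma 4.1. The finiteness hypothesis on $F$ and $G$ is precisely what guarantees that the intermediate intersection meets the fibers of $P_{XZ}$ in the expected dimension, so that the push-forward does not collapse and $G\circ F$ is again finite; this is the current analogue of the classical composition of finite correspondences. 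For the identity I claim the diagonal current $\Delta_X\in\mathcal C(X\times X)$ is neutral at $(X,\mathcal U)$: using the transversal intersection computation of Proposition 3.4 together with the projection formula, one finds $F\circ\Delta_X=F$ and $\Delta_Y\circ F=F$, since the intersection of $(F\times X)$ with $(W\times\Delta_X)$ is transversal along the graph of the identity and the fiber integral returns $F$ unchanged, exactly as in Example 4.6.

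The heart of the argument, and the step I expect to be the main obstacle, is associativity. For $F\in\mathcal C(W\times X)$, $G\in\mathcal C(X\times Y)$, $H\in\mathcal C(Y\times Z)$ I would show that both $H\circ(G\circ F)$ and $(H\circ G)\circ F$ equal the single symmetric expression on the quadruple product $Q=W\times X\times Y\times Z$,
\[
(Q_{WZ})_\ast\bigl[(F\times Y\times Z)\wedge(W\times G\times Z)\wedge(W\times X\times H)\bigr],
\]
computed with the product de Rham data on $Q$. The reduction of each bracketing to this common form uses the associativity of the intersection of currents (Property 2.12(5)), the graded commutativity (Property 2.12(3)), and repeated use of the projection formula (Proposition 2.18(2)) to commute the intermediate push-forwards past the intersections.

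The delicate point is that $\wedge$ depends on the de Rham data, so the two bracketings a priori involve different intermediate data on the triple products $W\times X\times Y$ and $X\times Y\times Z$. I would resolve this by insisting that every product datum be built functorially from the fixed factor data $\mathcal U_W,\mathcal U_X,\mathcal U_Y,\mathcal U_Z$, and that the data on each triple product agree with its image under the projections from $Q$; the related-data construction of Lemma 2.14 and Definition 2.15 is exactly the tool to match the data along the projections entering the projection formula. Granting this compatibility, both computations legitimately reduce to the same four-fold intersection on $Q$, and associativity then follows formally from Property 2.12(5). Verifying that the product de Rham data are stable under these projections is the technical crux of the proof; once it is in place, the category axioms hold and $Cord(\mathbb C)$ is well-defined.
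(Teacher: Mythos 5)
The paper's own ``proof'' of this statement is a single line: it cites [10] (Mazza--Voevodsky--Weibel), where the category of finite correspondences is constructed classically. The morphisms here are finite correspondences in the classical sense --- algebraic cycles on $X\times Y$ whose components are finite over $X$ --- so composition is the classical cycle-theoretic one: the finiteness forces the relevant intersections to be proper, composition is proper intersection followed by proper push-forward, and associativity and identities are verified in [10]. The de Rham datum $\mathcal U$ decorates the objects but plays no role in the morphisms or in their composition. Your proposal instead rebuilds the composition from the current-theoretic intersection $\wedge$ of section 2; that is a genuinely different route (closer in spirit to the paper's later proof of functoriality in Proposition--Definition 5.3(b), where a triple current intersection and the projection formula are used).

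The gap in your route is exactly the step you flag as the ``technical crux.'' Your resolution of the data-dependence of $\wedge$ --- demanding that the product de Rham data on $W\times X\times Y$, $X\times Y\times Z$ and the quadruple product be compatible under the projections --- is established nowhere in the paper: Definition 2.16 only assembles a product datum from factor data, and Lemma 2.14 relates data along a submanifold inclusion; neither gives invariance of the intersection when one product datum is exchanged for another, and for general Lebesgue currents no such invariance can hold, since Proposition 3.1 and Theorem 3.9 show the intersection genuinely varies with the datum. So, as written, your associativity argument rests on an unproved (and in general false) compatibility. What actually closes the gap is the hypothesis you mention but then do not use where it matters: finiteness. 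Because $F\times Z$ and $X\times G$ are algebraic cycles meeting properly (this is what finiteness over the source buys), Theorem 3.8 applies: the current intersection is independent of the de Rham data and coincides with Fulton's intersection, and the push-forward is the classical composition of finite correspondences. All category axioms then reduce to statements about classical proper intersections of cycles --- which is precisely the content of the citation [10]. In short, the correct use of finiteness is properness plus Theorem 3.8, which collapses your construction onto the classical one and makes the data-compatibility issue moot; without that reduction, your argument does not go through.
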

\bigskip

\begin{proof} 
The verification of the category is done  in [6].

\end{proof}

\bigskip

\begin{definition} Let $k$ be a whole number. 
Let $(X, \mathcal U)\in Cord(\mathbb C)$. 
Define  $\mathcal N_k\mathcal C(X)$ to be the linear span of Lebesgue currents  $$T\in \mathcal C(X)$$
satisfying
\par
(I)  $dim(T)\leq k$, OR $dim(T)\geq 2n-k$ OR,\par
(II) $supp(T)$ lies in an algebraic set $A$ of dimension
$$\leq {dim(T)+k\over 2}.$$

\bigskip

A current in  $\mathcal N_k\mathcal C(X)$ will  be called $\mathcal N_k$ leveled. The objects $\mathcal N_k\mathcal C(X)$ with usual
group homomorphisms form a subcategory of the Abelian category, denoted by $\mathcal N_k\mathcal C$.

\end{definition}

\bigskip

{\bf Remark} De Rham data  $\mathcal U$ plays no role in both categories $Cord(\mathbb C)$ and $\mathcal N_k\mathcal C$.  
However the functoriality  needs De Rham data. 
\bigskip

\begin{prodef}
The maps
\begin{equation}\begin{array}{ccc}
(X, \mathcal U) &\rightarrow &\mathcal N_k\mathcal C(X)

\end{array}\end{equation}
and 
\begin{equation}\begin{array}{ccc}
Hom\ in \ Cord(\mathbb C)  &\rightarrow & Hom \ in\ \mathcal N_k\mathcal C \\
\Gamma &\rightarrow & \Gamma_\ast, \quad  (covaraint)\\
\Gamma &\rightarrow & \Gamma^{\ast}, \quad (contrvariant)
\end{array}\end{equation}
\noindent define a covariant functor and a contravariant functor.
\end{prodef}

\bigskip

\begin{proof} Let $(X,\mathcal U_X),  (Y, \mathcal U_Y)$ be two objects in $Cord(\mathbb C)$. Let  $F\in \mathcal Z(X\times Y) $ be a homomorphism in $ Cord(\mathbb C)$.  The  corresponding homomorphsim in category $\mathcal N_k\mathcal C$ is defined 
 is defined to be $F_\ast$
$$\sigma \to (P_Y)_\ast [F\wedge (\sigma\otimes Y)]$$
where $\sigma\in \mathcal N_k\mathcal C(X)$. 
It suffices to show $F_\ast$ satisfies two conditions:
\par
(a) $F_\ast$ maps a $\mathcal N_k$ leveled current to a $\mathcal N_k$ leveled current.\par
(b) The map satisfies the composition criterion, i.e.
if $X, Y, W$ are smooth projective varieties over $\mathbb C$, and $Z_1, Z_2$ algebraic cycles are finite correspondences between
$X, Y$ and $Y, W$, then

\begin{equation}
(Z_2\circ Z_1)_\ast
= (Z_2)_\ast\circ (Z_1) _\ast
\end{equation}
where $Z_2\circ Z_1$ is the composition of finite correspondences.\par

Proof of (a):  By the definition of 
the intersection of currents, we obtain
$$deg(F_\ast(\sigma))=deg(\sigma).$$

Let $A$ be an algebraic set containing $\sigma$ such that the level of $\sigma$ is
$$k=deg(\sigma)-2deg_{\mathbb C} (A).$$

For any algebraic set $A$,   since  $F$ is a finite correspondence, 
\begin{equation}
deg(F_\ast(A))=deg(A).
\end{equation}
The level of $F_\ast(\sigma)$ is 
$$deg(F_\ast(\sigma))-2deg(F_\ast(A))$$
which is equal to
$$k=deg(\sigma)-2deg _{\mathbb C}(A).$$

The proof for contravariant $F^\ast$ is similar. 
\par

Proof of (b):  This is Proposition 4.3.

\end{proof}

\bigskip

\bigskip

Let 
\begin{equation}\begin{array}{ccc}
\kappa: \mathcal C(X) &\rightarrow & \mathscr D'(X)\\
T &\rightarrow & dT.
\end{array}\end{equation}
Let $\mathcal B(X)=\kappa^{-1}(\mathcal C(X))$ be the sublinear space that at least contain singular chains and $C^\infty$ forms.  
By Leibniz rule, part (6) of Property 2.6, the intersection $[\cdot \wedge \cdot]$ send 
$\mathcal B(X)\times \mathcal B(X)$ to $\mathcal B(X)$. So we let 
$$\mathcal N_k\mathcal B(X)=\mathcal N_k\mathcal C(X)\cap \mathcal B(X).$$

\bigskip

\begin{proposition}
 Then
\par
(a) $\mathcal N_{\bullet}\mathcal B(X)$ forms a decreasing  filtration of complex of $\mathcal B(X)$\par
(b) its spectral sequence $E_{\bullet}$  converges to
the $\mathbb R$ coefficiented,  algebraically leveled  filtration defined as  \begin{equation}
\mathcal N_k(X)=\sum_{r=dim(X)-k}^{2dim(X)-k} N^r H^{2r+k}(X), k=0, \cdots
\end{equation} on the total real cohomology $H(X;\mathbb R)$
(See [8] for details on leveled filtration).
 
\medskip

\end{proposition}

\bigskip

\bigskip

\begin{proof}
(a). Let $T\in   \mathcal N_k\mathcal B(X)$.  If $dim(T)\in [0, k+1]\cup [2n-k-1, 2n]$, then by the definition
$T$ is $\mathcal N_{k+1}$ leveled. If $dim(T)\in (k+1, 2n-k-1)$, then $dim(T)\in (k, 2n-k)$.  If $dim(T)$  is not in above cases,
there is an algebraic set $A$ such that 
$$supp(T)\subset A, \quad and\ dim(A)\leq {dim(T)+k\over 2}.$$
This implies that
$$supp(T)\subset A, \quad and\ dim(A)\leq {dim(T)+k+1\over 2}.$$
So $$T\in   \mathcal N_{k+1}\mathcal B(X).$$
This shows that 
$$ \mathcal N_0\mathcal B(X)\subset \cdots  \mathcal N_k\mathcal B(X)\subset  \mathcal N_{k+1}\mathcal B(X)\subset \cdots\subset \mathcal B(X)$$ form a
filtration.  
Since the differential $d$ on the differential form preserves the support, 
$d$ maps $ \mathcal N_k\mathcal B(X)$ to $ \mathcal N_k\mathcal B(X)$. Thus $( \mathcal N_{\bullet}\mathcal B, d)$
 is a decreasing filtration of the complex.

\par
(b) Because $\mathcal B(X)$ includes singular chains,  the limit of the spectral sequence,
$$\sum_{p, q}Gr^p (H^{p+ q}( \mathcal N_{\bullet}\mathcal B(X))) $$
 is a  filtration on the cohomology $$H(X;\mathbb R).$$
Notice that by the definition, this filtration is
the algebraically leveled filtration.

\end{proof}

\subsection{ Family of currents}

\quad\par

\begin{definition}\quad\par
 Let $S$ and $\mathcal X$ be manifolds equipped with De Rham data. 
Let $S\times \mathcal X$ be equipped with the product   De Rham data. 
Let $\mathcal I\in \mathcal C(S\times X)$ be a homogeneous Lebesgue current.  Let $P_{\mathcal X}$ be the projection
$$S\times \mathcal X\to \mathcal X.$$
We denote
\begin{equation}
(P_\mathcal X)_\ast [(\{s\}\otimes \mathcal X)\wedge \mathcal I ]
\end{equation}
by $\mathcal I_s$.  The set $\{\mathcal I_s\}$ for all such  $s$ in  $S$ will be called a family of 
of currents parametrized by  $S$. \par

\end{definition}

{\bf Remark} We should note that we have abused the notation to use $P_{\mathcal X}$ for its restriction 
$P_\mathcal X|_{\bullet} $ to a subset.   The family $\mathcal I_s$ depends on extrinsic De Rham data.

\bigskip

\begin{proposition} Let $\mathcal X$ be a  manifold. Let $S$ be real one dimensional.
Let $I_\epsilon\subset S$ be diffeomorphic to a finite closed interval of $\mathbb R$ with two end points $0$ and $\epsilon>0$. Let
 $S$ be equipped with De Rham data, $S\times  \mathcal X$ be equipped with 
a product De Rham data and $\mathcal X$ be equipped with corresponding the projection De Rham data.   
Let $\mathcal J$ be a  Lebesgue current 
\begin{equation}
S\times  \mathcal X.
\end{equation}
Then for  a closed current $T\in \mathcal C(\mathcal X)$, there is a well-defined  current 
$$\mathcal J_{I_\epsilon} (T)$$ on $\mathbb R\times \mathcal X$ such that
\par

(1)
\begin{align}\begin{split}
 &[ \mathcal J_\epsilon\wedge T]-[ \mathcal J_0\wedge T]
\\& =(-1)^{kp } d ((P_\mathcal X)_\ast \mathcal J_{I_\epsilon}(T))+(-1)^{p}[(I_\epsilon\otimes T)\wedge d\mathcal J]
\end{split}\end{align}
where $k=deg(\mathcal J), p=deg(T)$, $P_{\mathcal X}:\mathbb R\times \mathcal X\to\mathcal X$ is the projection. 
\par
(2) \begin{equation}\begin{split}
 &(P_\mathcal X)_\ast \mathcal J_{I_\epsilon}(T)=
\epsilon (P_\mathcal X)_\ast \mathcal J_{I_1}(T))\\
& [(I_\epsilon\otimes T)\wedge d\mathcal J]=\epsilon[(I_1\otimes T)\wedge d\mathcal J],
\end{split}\end{equation}
 and $[ \mathcal J_\epsilon\wedge T]$ is continuous in $\epsilon$ ( in the topology of currents) \footnote{ The continuity for higher dimensional parameter space $S$ does not hold}.

\end{proposition}

\bigskip

\begin{proof}  
Let $\mathbb R\times \mathcal X$ be equipped with a product De Rham data. 
We defined 
\begin{equation}
\mathcal J_{I_\epsilon}(T) =[ \mathcal J \wedge (I_\epsilon\otimes T) ].
\end{equation}
 
\par

Then we calculate
\begin{align}\begin{split}
& d\mathcal J_{I_\epsilon}(T)\\
 &(\text { by Leibniz Rule}.)
\\ &= [d (I_\epsilon\otimes T) \wedge \mathcal J]+(-1)^{p+1}[(I_\epsilon\otimes T)\wedge d\mathcal J]\\
&=[(\{\epsilon\}\otimes T-\{0\}\otimes T)\wedge \mathcal J]+(-1)^{p+1}[(I_\epsilon\otimes T)\wedge d\mathcal J]\\
 &(\text { by associativity and commutativity,  Property 2.6}.)\\
&=(-1)^{kp}  [ ((\{\epsilon\}-\{0\})\otimes \mathcal X)\wedge  \mathcal J\wedge (\mathbb R\otimes T)]+(-1)^{p+1}[(I_\epsilon\otimes T)\wedge d\mathcal J]
\end{split}\end{align}

By Definition 4.11, 
$$(P_\mathcal X)_\ast  [ \bigl((\{\epsilon\}-\{0\})\otimes \mathcal X\bigr)\wedge  \mathcal J]=[\mathcal J_\epsilon]-[\mathcal J_0].$$
Thus applying the projection formula, Proposition 2.8, we obtain
\begin{align}\begin{split}
(P_\mathcal X)_\ast   [ \bigl((\{\epsilon\}-\{0\})\otimes \mathcal X\bigr)\wedge  \mathcal J\wedge (\mathbb R\otimes T)]
=[ \mathcal J_\epsilon\wedge T]-[ \mathcal J_0\wedge T].
\end{split}\end{align}

Combination of (4.34), (4.35) is the assertion (4.31). 
We complete the proof. 

(2) We apply the construction (4.33). 
Then (4.32) follows from the equality of currents, 
$$ I_\epsilon=\epsilon I_1.$$
Finally by part (1) and formula (4.32) the family $ [ \mathcal J_\epsilon\wedge T]$ is continuous in $\epsilon$. 
\end{proof}

\bigskip

 \begin{ex}
Let $X$ be a smooth projective variety of dimension $n$ over $\mathbb C$.
Let $T$ be a closed Lebesgue current representing a non-zero primitive 
cohomology class in $H^n(X;\mathbb Q)$.
Let \begin{equation}
V\subset \mathbf P^1\times X.
\end{equation} be  a Lefschetz pencil in $X$.
Assume $\mathbf P^1\times X$ is equipped with a product De Rham data. 
Let \begin{equation}
\mathcal I=[V\wedge (\mathbf P^1\times T)].
\end{equation} be the intersection current. 
Then its fibre currents $\mathcal I_t$ are exact  for all $t$. But $\mathcal I$ is not  because $T$ is not. 
\end{ex}

\bigskip

\begin{ex}
Let $\mathcal X$ be a manifold and $T$  a non-zero homogeneous Lebesgue current in $\mathcal X$.
Let $S^1\times \mathcal X$ be equipped with a product De Rham data. Let $t_0$ be a point of 
$S^1$. Then $\mathcal I=\{t_0\}\otimes T$ gives a family of currents by Definition 4.11.
Notice $\mathcal I_t=0$ for all $t$ including $t_0$, but $\mathcal I$ is non-zero, and not closed provided $T$ is not.

\end{ex}

\bigskip

\section{Generalized Hodge conjecture on 3-folds}

 \bigskip

We give an application in complex geometry. 
\bigskip

\begin{theorem}

Generalized Hodge conjecture is correct on a 3-fold $X$.

\end{theorem}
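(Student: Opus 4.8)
The plan is to reduce the generalized Hodge conjecture on the threefold $X$ to a single nontrivial case and then to settle that case with the family-of-currents machinery of Section 6 together with the leveled-current filtration of Section 5. First I would dispose of every case except the coniveau-one part of $H^3$. Since $\dim_{\mathbb C}X=3$ one has $h^{4,0}=h^{5,0}=0$, so by hard Lefschetz every class of $H^4(X;\mathbb Q)$ is $L\beta$ and every class of $H^5(X;\mathbb Q)$ is $L^2\gamma$, hence supported on a hyperplane section or on a curve, matching the predicted coniveau; the $(2,2)$-classes in $H^4$ are algebraic because $L\colon H^2\xrightarrow{\sim}H^4$ is an algebraic correspondence and $(1,1)$-classes are algebraic by the Lefschetz $(1,1)$ theorem. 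The coniveau-one part of $H^2$ is again Lefschetz $(1,1)$, and the remaining pieces—degrees $0,1,6$ and the deepest coniveau levels of $H^3$ and $H^5$—are trivial or forced to vanish for Hodge-type reasons. Thus the theorem becomes the single assertion that the maximal sub-Hodge structure $H_{1,3}\subseteq H^3(X;\mathbb Q)$ with vanishing $(3,0)$-component lies in the coniveau filtration $N^1H^3(X;\mathbb Q)$; moreover the non-primitive part $LH^1(X)$ is already supported on a hyperplane section, so only the primitive part matters.

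By Proposition 5.4 the coniveau filtration on $H(X)$ is the one induced by the algebraically leveled currents $\mathcal N_\bullet\mathcal C(X)$, so it suffices to represent every class of $H_{1,3}$ by a closed Lebesgue current supported on an algebraic surface. I would produce such a current from a Lefschetz pencil $\{V_t\}_{t\in\mathbb P^1}$ of hyperplane sections, in the setting of Definition 6.1 and Example 6.7: the universal family $V\subset\mathbb P^1\times X$ is a Lebesgue current whose family $\mathcal I_t=V_t$ consists of surfaces. The Hodge-theoretic input is the classical description of $H^3_{prim}(X)$ by the weight-two variation carried by the $H^2$ of the $V_t$; under the bidegree shift by $(1,1)$ coming from integration over the base, the vanishing of the $(3,0)$-component of $H_{1,3}$ forces the corresponding sub-variation to have level zero, i.e. to consist fibrewise of $(1,1)$-classes. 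After a Tate twist this is exactly the polarized Hodge structure of weight one ([8],[12]) invoked in the introduction, and each fibrewise class is algebraic on $V_t$ by the Lefschetz $(1,1)$ theorem.

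The geometric realization then runs as follows. Each fibrewise $(1,1)$-class is the class of a divisor on $V_t$, and I would assemble these into one algebraic divisor $D$ in the total space by the theorem of the fixed part together with the algebraicity of the Hodge locus; in the present language this is the assertion that the secondary currents $L^t$ of Example 6.7, given by $[J\wedge V_t]=bL^t$, organize into a single closed current supported on a divisor rather than sweeping out all of $X$. Passing to a desingularization $\widetilde D$ and using the correspondence of Section 4, the Gysin current of $\widetilde D\hookrightarrow X$ realizes a map whose image contains $H_{1,3}$; supportivity (Property 2.12, part (1)) keeps the representing current on $D$, and cohomologicity (Property 2.12, part (4)) checks that it represents the prescribed class. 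Feeding this back into Proposition 5.4 yields $H_{1,3}\subseteq N^1H^3(X;\mathbb Q)$, the required coniveau bound.

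The hard part will be the assembling step. The fibrewise divisor classes exist on each smooth $V_t$ by Lefschetz $(1,1)$, but patching them into a \emph{global} algebraic divisor—equivalently, showing that the secondary currents $L^t$ form an honest family in the sense of Definition 6.1 and spread in codimension one—is precisely the failure that Example 6.7 exhibits for a genuinely primitive class. The resolution must exploit the vanishing of the $(3,0)$-part of $H_{1,3}$: this is what collapses the sub-variation to level zero and thereby makes the family of classes algebraic and its spreading cohomological. Verifying this collapse, and checking that the spreading is compatible with the fixed de Rham data so that the limiting current is genuinely Lebesgue and supported on $D$, is where the principal technical effort will lie.
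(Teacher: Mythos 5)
Your opening reduction (to showing the maximal coniveau-one sub-Hodge structure of $H^3(X;\mathbb Q)$ lies in $N^1H^3(X;\mathbb Q)$) matches the paper's first step, but the core of your argument breaks down at exactly the point you yourself flag as ``the hard part,'' and the gap is not merely technical. A coniveau-one class $\alpha\in H^3(X;\mathbb Q)$ does not induce fibrewise classes in $H^2(V_t)$ for a Lefschetz pencil $\{V_t\}$: under the Leray filtration of the pencil, the primitive part of $H^3(X)$ sits in $H^1(\mathbb P^1, R^2\pi_\ast\mathbb Q)$, the degree-one cohomology of the local system, not in the space of global sections $H^0(\mathbb P^1, R^2\pi_\ast\mathbb Q)$. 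Consequently there are no ``fibrewise $(1,1)$-classes'' to which the Lefschetz $(1,1)$ theorem, the theorem of the fixed part, or the algebraicity of Hodge loci could be applied, and the vanishing of the $(3,0)$-component of the sub-Hodge structure does not ``collapse the sub-variation to level zero'' -- it is a condition on the Hodge filtration of a weight-three structure assembled from $H^1$ of the base with coefficients, not on any sub-local-system of the fibres. This is precisely the obstruction that the paper's own Example 6.7 exhibits (the secondary currents $L^t$ with $[J\wedge V_t]=bL^t$ need not form a family, and their spreading is not cohomological), and your proposal supplies no mechanism to get around it.

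The paper's route is entirely different and bypasses the pencil. It invokes Voisin's result ([12]): every polarized coniveau-one sub-Hodge structure $L\subset H^3(X;\mathbb Q)$ is of the form $\Psi_\ast(H^1(C;\mathbb Q))$ for a smooth projective curve $C$ and a Hodge class $\Psi\in Hdg^4(C\times X)$. The pushforward $P_\ast\Psi$ is then a degree-4 Hodge class on the threefold $X$, which is known to be algebraic (this uses only hard Lefschetz and Lefschetz $(1,1)$ on $X$, not the open Hodge conjecture on $C\times X$). The current machinery enters at this point: $\Psi$ is represented by a closed Lebesgue current $T$, adjusted by subtracting $[e]\otimes bK$ so that $P(supp(T))=supp(P_\ast(T))=|S|$ for an algebraic surface $S$ (formula (7.6), which the paper itself only asserts), and the supportivity of the correspondence (Proposition 4.5) forces every current $T_\ast(\theta)$, with $\theta$ a closed Lebesgue current representing a class of $H^1(C;\mathbb Q)$, to be supported on $|S|$; the classes so obtained are exactly $L$, whence $L\subset N^1H^3(X;\mathbb Q)$. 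Your proposal contains none of the three essential ingredients of this argument -- Voisin's curve correspondence, the algebraicity of degree-4 Hodge classes on a threefold, and the support-adjustment trick feeding into supportivity -- and without them (or a genuine substitute) the assembly step your plan depends on remains unproved and, as formulated, unprovable.
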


\bigskip

\begin{proof} Let $X$ be a smooth projective variety over $\mathbb C$. In the cohomology vector space with rational coefficients, we
denote the coniveau filtration of coniveau $i$ and degree $2i+k$ 
by $$N^i H^{2i+k}(X)$$
(defined in [5] where the coniveau filtration is called the arithmetic filtration) and the linear span
of sub-Hodge structures of Hodge coniveau $i$ and degree $2i+k$ by 
$$M^iH^{2i+k}(X).$$
The generalized Hodge conjecture asserts that
\begin{equation}
M^iH^{2i+k}(X)=N^i H^{2i+k}(X).
\end{equation} for all existing $i, k$. For a 3-fold $X$, the only non-trivial cases are
\begin{align}
  & M^1H^{2}(X)=N^1 H^2(X) ? \\
 & M^1 H^3(X)= N^1 H^3(X) ?
\end{align}

The formula (5.2) is well-known as Lefschetz $(1, 1)$ theorem. For (5.3),  Delign's lemma 8.2.8, [1] already implies  that
 $$N^1 H^3(X)\subset M^1 H^3(X). $$
   Thus
it is sufficient to prove 
\begin{equation}
M^1 H^3(X)\subset N^1 H^3(X).
\end{equation}

Let $L\subset H^3(X; \mathbb Q)$ be a sub-Hodge structure of coniveau $1$.  So it is polarized. 
In [7], Voisin  used the intermediate Jacobian to prove a cohomological result that says 
   there is a smooth projective curve $C$, and a Hodge cycle \begin{equation}
\Psi\in  Hdg^4(C\times X)
\end{equation} such that
\begin{equation} \Psi_\ast (H^1(C;\mathbb Q))=L.
\end{equation}
where $\Psi_\ast$ is defined as the  image 
\begin{equation}\langle P\rangle _\ast \biggl( \Psi\cup ((\bullet)\otimes 1)) \biggr),\end{equation}
of the cohomological  map  $\langle P\rangle $ 
with the projection $P: C\times X\to X$.  
Next we convert the cohomological expression (5.7) to current's expression as 
\begin{equation}  P _\ast \biggl[  T\wedge ( (\bullet)\otimes 1)) \biggr],\end{equation}
where $T$ is any current representing $\Psi$. 
(The formula (5.8) is the key turning point of the proof that  has no difference from (5.7) in cohomology. However it carries the information of the support which is
 absent in (5.7)).  Next we analyze the current $T$. Notice $\langle P\rangle_\ast$ is a
Hodge morphism, so $\langle P\rangle_\ast (\Psi)$ is a Hodge cycle in $X$. 
By the Lefschetz (1, 1) theorem  $\langle P\rangle _\ast (\Psi)$  is algebraic on $X$. So 
there is a singular cycle $T_{\Psi}$ on $C\times X$ representing the class $ \Psi$ such that the projection in currents satisfies 
\begin{equation}
P_\ast(T_{\Psi})=S+b W
\end{equation}
where $S$ is a current of integration over the algebraic cycle $S$, and $b W$ is an exact Lebesgue current of dimension $4$ in $X$.
(adjust $ \Psi$ so $S$ is non-zero). 
  Consider another current in $C\times X$
\begin{equation}
T:=T_{\Psi}-[e]\otimes b W
\end{equation}
denoted by $T$, where $[e]$ is a current of
evaluation at a point $e\in C$.  Note $T$ is Lebesgue.
By adjusting the singular chain  $W$ continuously, we can assume the projection of the support of $T$ satisfies
 \begin{equation}
P (supp(T))=supp ( P_\ast (T)),
\end{equation}
i.e. the projection of the support is the support of the projection. (See appendix for the proof).  Thus we have the projection of currents
\begin{equation}
P_\ast (T)=S.
\end{equation}

Let $\Theta$ be the collection of closed Lebesgue currents on $C$ representing the classes in $H^1(C;\mathbb Q)$.  
Now we apply the real intersection theory  to establish the correspondence of currents ( in section 4.1), 
\begin{equation}
T_\ast (\Theta) 
\end{equation}
defined as (5.8). 
It gives  a family of currents (parametrized by $\Theta$) supported on the support of the current
$$
P_\ast (T)=S,
$$
which is the integration  over an algebraic cycle $S$, i.e. the the family of currents are all supported on the algebraic set
$|S|$.    This is  a criterion for coniveau filtration in terms of currents, i.e.
 for  $\beta\in  T_\ast (\Theta)$, the cohomology class $\langle \beta\rangle $ of $\beta$ satisfies  
\begin{equation}\begin{array}{ccc}
\langle \beta\rangle \in ker\biggl (H^3(X;\mathbb Q) &\rightarrow &  H^3(X-|S|;\mathbb Q)\biggr)
\end{array}\end{equation}

By Proposition 4.5 and Voisin's assertion (5.6), 
the collection of cosmological classes of the currents in $T_\ast (\Theta)$ consists of  all classes 
 in $L$. 
 This shows $L\subset N^1H^3(X)$. We complete the proof.

\end{proof}

\bigskip

\begin{appendices}

\section{Support of the projection}

\par

 In this Appendix, we study the supports of 
cellular cycles in a Cartesian product .\bigskip

Let $\mathcal X$ be a compact manifold of dimension $n$. We use the following setting in algebraic topology.
A $p$-singular simplex $S$ consists of
three elements: a $p$ dimensional  polyhedron $\Delta^p$  in $\mathbb R^v$, an orientation of
$\mathbb R^v$, and a $C^\infty$ map $f$ of $\mathbb R^v$ to  $X$. 
A chain is a linear combination of singular simplexes.  The support $|S|$ of $S$ is the image of $S$ in $X$.
A point in $S$ is a point in $|S|$.

Let $\mathcal  Y$ be another compact manifold of dimension $m$.  Let
\begin{equation}\begin{array}{ccc}
\mathcal P: \mathcal Y\times \mathcal X &\rightarrow & \mathcal X
\end{array}\end{equation}
be the projection.

\bigskip

\begin{definition}
Let $\sigma$ be a  $C^\infty$ $p$-singular simplex of $\mathcal Y\times \mathcal X$. Let $a$ be an interior point of $\sigma$.
If $$\mathcal P^{-1}\circ \mathcal P(a)\cap \sigma$$ is a finite set, we say 
$\sigma$ is finite at $a$. If $\sigma$ is finite at all interior points of $\sigma$, we say
$\sigma$ is finite to $\mathcal X$.  The chain is finite if each simplex in the chain is finite.
\end{definition}

\bigskip

\begin{proposition}
For any $C^\infty $ $p$-singular simplex  $\sigma$ in the coordinates chart of $\mathcal Y\times \mathcal X$ with $p\leq dim(\mathcal X)$, 
there is barycentric subdiviosn (multiple times) of $\sigma$
\begin{equation}
 Sd (\sigma)=\sum_{finite \ i} C_i, \end{equation}
such that 
each simplex $C_i$ is homotopic to another simplex finite to $\mathcal X$ and the homotopy is
a constant on the $\partial Sd(\sigma)$

\end{proposition}

\begin{proof}
Let $$\mathbb R^m, \mathbb R^n, \mathbb R^m\times \mathbb R^n$$ be the coordinate's charts for 
$\mathcal Y, \mathcal X, \mathcal Y\times \mathcal X$ respectively such that
$p\leq n$.  We would like to show that there is a multi barycentric subdivision  
to  divide  $\sigma$ to  a chain $\sum_{i=0}^N \sigma_i$ (a sum of smaller regular cells $\sigma_i$) such that there are homotopy $\sigma'_i$ for each 
$\sigma_i$  that  is finite to $\mathbb R^n$, and boundary of $\sigma_i'$ is the same as that of $\sigma_i$. 
   We use a claim to construct such small simplex $\sigma_i$. 

\bigskip

\begin{ass}
Let  $g: \mathbb R^k\to \mathbb R^l$ be a $C^\infty$ map with $l\geq k$. Let $q\in \mathbb R^k$ be a point. Then 
 there is an open ball $B$ of $q$ and continuous map $g': \mathbb R^k\to \mathbb R^l$. 
 such that\par
1) $g$ is homotopically deformed to $g'$ such that at all points on $\partial B$ and \par \hspace{1.5 CC} the  
 boundary $D$ of the unit ball $g$ is  fixed under the homotopy,\par

2) $g'$ in $B\backslash D$  is  $C^\infty$ and finite to one to its image in $\mathbb R^l$.

\end{ass}

\begin{proof} of Claim  A.3:  Let $\theta_1, \theta_2$ be two analytic functions 
on $\mathbb R^k$ such that
$\theta_1=\epsilon, \theta_2=0$ define $\partial B$ and $D$ where $\epsilon$ is  the radius of $B$.   
We consider the homotopy

\begin{equation}
(1-t) g+t ( g+\theta_1\theta_2 h),\quad\quad  t\in [0, 1].
\end{equation}
where $h$ is some  $C^\infty$ function.
Thus $g$ is homotopic to 
\begin{equation}
 \biggl(g+\theta_1\theta_2 h\biggr)
\end{equation}
The determinant of a maximal minor of the differential $J$ of \begin{equation}
 \biggl(g+\theta_1\theta_2 h\biggr)
\end{equation}
is a polynomial in $\theta_1, \theta_2$ whose coefficients are $C^\infty$ functions of $h$. 
Thus for a small $\epsilon$, by choosing a suitable $h$, the determinant is non-zero for all points in 
$B$ with $\theta_2\neq 1$ and $\theta_1\neq \epsilon$, i.e. the differential $J$ has full rank. 
By mean value theorem 
$$
 \biggl(g+\theta_1\theta_2 h\biggr)
$$ is 1-to-1 to its image when restricted to $B\backslash D$.  

It satisfies  required conditions in Claim A.3.

\end{proof}

Now let  $f$ be the composition of $$(\Delta^p)'\to \mathcal Y\times \mathcal X\to \mathcal X$$
where $(\Delta^p)'$ is a neighborhood of $\Delta^p$ .
Next we cover $ \bar \Delta^p$ with finitely many balls $B_i, i=1, \cdots, l$ and the homotopy in Claim A.3  for each $B_i$.
Consider the first open set $B_1$. Applying Claim A.3, $f$ is homotopic to $f_1: (\Delta^p)'\to \mathcal X$ such that
the homotopy fix the map $f$ on $\partial B_1$ and $D$ and $f$ is homotopic to $f_1$ which is finite-to-one 
on $B_1$. Then we repeat the homotopy from $f_1$ to $f_2$, from $ f_2$ to $f_3$, $\cdots$, from $f_{l-1}$ to $ f_l$. Finally, we obtain a continuous map $f_l$ which is finite-to-one
in each $B_i$ and is equal to $f$ on $D$.  Let $C_i$ be the barycentric subdivisions obtained from the covering $B_i, i=1, \cdots, l$.
Then $f_l$ is homotopy  to $f$. We complete the proof.

\end{proof}
\bigskip

\begin{proposition}
For any cellular  cycle $S$ in  $Y\times X$,  of dimension $p<dim(X)$, $S$ is homopotic to a  cycle finite to $X$.
\end{proposition}

\bigskip

\begin{proof} Let \begin{equation}
S=\sum_i C_i
\end{equation}
and each cell $C_i$ satisfies Proposition A.2 with a homotopy $h_i$.
Then there is synchronized homotopy with the same parameter $t\in [0, 1]$ such that
$C_i$ is homotopic to another cell $C_i'$  1-to-1 to $\mathcal X$, but the boundary is fixed. 
Since the boundaries are not changed, this synchronized homotopy are glued together to yield a homotopy of the cycle $S$, 
\begin{equation}
S'=\sum_i C_i'.
\end{equation}

\end{proof}
\bigskip

\end{appendices}

 \textsc{Department of Mathematics, Rhode Island college,  Providence, 
   RI 02908}\par
  \text{E-mail address}:  \texttt{binwang64319@gmail.com}, 
\text{Fax}:   \texttt{1-401-456-4695}


\begin{thebibliography}{10}



 

\bibitem{Del }{P. Deligne}, {\em  Th\'eorie de Hodge: III},  Publ. Math IHES 44 (1974), pp 5-77.

\bibitem{De}{\sc G. de Rham},
{\em Differential manifold},  English translation of ``Vari\'et\'es diff\'erentiables", Springer-Verlag (1980).



\bibitem{Fu}{\sc W. Fulton},
{\em Intersection theory}, Springer-Verlag (1980).





\bibitem{GS}{\sc H. Gillet, C. Soul\'e},
{\em Arithmetic intersection theory}, 
Publications Math\'ematiques de l'IH\'ES, Volume 72 (1990), p. 93-174.





\bibitem{Groth}{\sc A. Grothendieck}, {\em Hodge's general conjecture is false for trivial reasons},
Topology, Vol 8 (1969), pp 299-303.





\bibitem{Ma}{\sc C. Mazza, V. Voevodsky, 
C. Weibel}, {\em Lecture Notes on Motivic Cohomology}, Clay Mathematics Monographs (2000). 







\bibitem{CVoisin}{\sc C. Voisin}, {\em Lectures on the Hodge and Grothendieck-Hodge conjectures},
Rend.Sem., Univ. Politec. Torino, Vol. 69, 2(2011), pp 149-198.





\bibitem{Wang 1}{\sc B. Wang},
{\em Leveled sub-cohomology },  Preprint, 2018

\bibitem{Wang 2}{\sc B. Wang},
{\em Real intersection theory (I) },  Preprint, 2020

\end{thebibliography}
\end{document}